\newcommand{\R}{\mathbb R}
\newcommand{\Z}{\mathbb Z}
\newcommand{\N}{\mathbb N}
\renewcommand{\phi}{{\varphi}}
\def\supp{\mathop{\rm supp}}
\numberwithin{equation}{section}
\newtheorem{theorem}{Theorem}[section]
\newtheorem{lemma}[theorem]{Lemma}
\newtheorem{corollary}[theorem]{Corollary}
\newtheorem{Remark}[theorem]{Remark}
\newtheorem{proposition}[theorem]{Proposition}
\newtheorem{example}[theorem]{Example}
\newcommand{\Ba}[1]{\begin{array}{#1}}
\newcommand{\Ea}{\end{array}}
\newcommand{\Be}{\begin{equation}}
\newcommand{\Ee}{\end{equation}}
\newcommand{\Bea}{\begin{eqnarray}}
\newcommand{\Eea}{\end{eqnarray}}
\newcommand{\Beas}{\begin{eqnarray*}}
\newcommand{\Eeas}{\end{eqnarray*}}
\newcommand{\Benu}{\begin{enumerate}}
\newcommand{\Eenu}{\end{enumerate}}
\newcommand{\Bi}{\begin{itemize}}
\newcommand{\Ei}{\end{itemize}}
\newcommand{\BR}{\begin{Remark} \em}
\newcommand{\ER}{\end{Remark}}
\newcommand{\BE}{\begin{example} \em}
\newcommand{\EE}{\end{example}}
\newcounter{reg}
\newcounter{regTO}
\newcommand{\vertiii}[1]{{\left\vert\kern-0.25ex\left\vert\kern-0.25ex\left\vert #1
		\right\vert\kern-0.25ex\right\vert\kern-0.25ex\right\vert}}
\begin{document}

\title[Extra invariance and the Zak transform]{Extra invariance of principal shift invariant spaces and the Zak transform.}

\author{Davide Barbieri}

\address{Davide Barbieri
\\
Departamento de Matem\'aticas\\
Universidad Aut\'onoma de Madrid\\
28049, Madrid, Spain} \email{davide.barbieri@uam.es}

\author{Eugenio Hern\'andez}

\address{Eugenio Hern\'andez
	\\
	Departamento de Matem\'aticas\\
	Universidad Aut\'onoma de Madrid\\
	28049, Madrid, Spain} 
	 \email{eugenio.hernandez@uam.es}

\author{Carolina Mosquera}
\address{Carolina Mosquera
\\
Departamento de Matem\'atica \\
Universidad de Buenos Aires\\
Ciudad Universitaria, Pabell\'on I, 1428 Buenos Aires, Argentina \\
and IMAS-CONICET, Consejo Nacional de Investigaciones Cient\'ificas y T\'ecnicas, Argentina
} \email{mosquera@dm.uba.ar}

\begin{abstract}
We prove a necessary and sufficient condition for a principal shift invariant space of $L^2(\R)$ to be invariant under translations by the subgroup $\frac{1}{N} \Z\,, N>1$. This condition is given in terms of the Zak transform of the group $\frac{1}{N} \Z\,.$ This result is extended to principal shift invariant spaces generated by a lattice in a general locally compact abelian (LCA) group.
\end{abstract}


\date{\today}
\subjclass[2010]{47A15, 43A25, 43A32}

\keywords{Invariant spaces, Zak transform, LCA groups. }

\maketitle

\section{Introduction and main result}\label{secIntroduc}

Let $H$ be an additive subgroup of $\R$. A closed subspace $V$ of $L^2(\R)$ is called $H$-{\bf invariant} if it is invariant under translations by elements of $H$. That is, when $f\in V$, then $T_h(f)\in V$ for all $h\in H$, where $T_h(x)=f(x-h).$ A $\Z$ invariant subspace of $L^2(\R)$ is called {\bf shift invariant}. 

\

Shift invariant spaces are the core spaces of Multiresolution Analysis (\cite{Mey90, Dau92, HW96, Mal99}), and as such they are used to study signals and images.  They are also used as models to approximate functional data (\cite{dBDVR94, ACHM07}).

\

Shift invariant spaces are also the natural spaces for sampling. For a measurable set $A \subset \R$, the Paley-Wiener space $PW(A)$ is defined by
$$
PW(A) := \{ f\in L^2(\R) : \supp \widehat{f} \subset A  \}\,,
$$
where $\displaystyle \widehat f (\xi) = \int_{\R} f(x) e^{-2\pi i x\xi}\,dx\,$ denotes the Fourier transform of $f$. The Whittaker-Shannon-Kotel'nikov sampling theorem establishes that any signal $ f$ in the space $PW([-M/2\,, M/2]), M>0,$ can be recovered with the samples $\{ f(k/M)  \}_{k\in \Z}$ by the formula
\begin{equation} \label{Eq:sampling}
f(x) = \sum_{k\in \Z} f\left( \frac{k}{M}\right)  \frac{\sin \pi(Mx- k)}{\pi(Mx-k)}\,,
\end{equation}
with convergence in $L^2(\R)$ and pointwise uniformly. The space $PW(A)$ is shift invariant, that is invariant under translations by the group $\Z$. It has extra invariance, since it si also invariant under the elements of the group $\R$, a bigger group than $\Z.$

\

There are other closed additive subgroups of $\R$ that contain $\Z$. All of them are of the form $\frac{1}{N} \Z$ for some natural number $N>1$. In \cite{ACHKM10} several equivalent conditions are given to determine if a shift invariant space is also $\frac{1}{N} \Z$ invariant, $N\in \N, N >1.$ Their results are given in terms of cut-off spaces in the Fourier transform side, gramians and range functions.

\

A particular important class of $H$ invariant spaces is the one whose elements are generated by the $H$-translations of a single function $\psi \in L^2(\R)$. They are called  {\bf principal} and define as 
$$
\langle \psi \rangle_{H} := \overline{{\mbox span}} \{T_h \psi: h\in H\},
$$ 
where the closure is taken in $L^2(\R).$ It can be seen using \eqref{Eq:sampling} that the Paley-Wiener space $PW([\-1/2\,, 1/2])$ is principal and generated by the function $\psi \in L^2(\R)$ given by $\widehat \psi = \chi_{[-1/2\,, 1/2]}.$

\

For principal shift invariant spaces $\langle \psi \rangle_\Z \subset L^2(\R)$ it is shown in \cite{SW11} that $\langle \psi \rangle_\Z$ is also $\frac{1}{N} \Z$ invariant, $N\in \N, N >1\,,$ if and only if for all $p=1,2, \dots, N-1\,,$
$$
P_{\psi,N}(\xi) P_{\psi,N}(\xi + p)=0\,, \ a.e. \ \xi\in \R\,,
$$
where $P_{\psi,N}$ is the periodization function of $\psi$ for the group  $\frac{1}{N} \Z\,,$ that is
$$
P_{\psi,N} (\xi) := \sum_{k\in \Z} |\widehat \psi (\xi + Nk)|^2\,.
$$

The Zak transform of a function $f \in L^1(\R)$ for the group $\frac{1}{N} \Z, N\in \N\,,$ is given by 
\begin{equation} \label{Eq:ZakN}
Z_N(f)(x,\xi) := \frac{1}{N} \sum_{k\in \Z} f(x + \frac{k}{N}) e^{-2\pi i \frac{k}{N}\xi}\,, \ x, \xi \in \R\,.
\end{equation}
It can be extended to be an isometric isomorphism from $L^2(\R)$ onto $L^2([0,1/N)\times [0,N)).$ For the proof of this result and other properties of the Zak transform, together with historical background and references, see \cite{Jan88}. It is a very useful tool in time-frequency analysis (\cite{Gro01}) and in situations where the Fourier transform is not available, such as in Harmonic Analysis in non-commutative discrete groups (\cite{BHP14, BHP15}). And it is also of great value in abelian Fourier Analysis: as an example see the simple proof of the Plancherel Theorem given in \cite{HSWW10} using the Zak transform.

\

The main purpose of this article is to give a characterization of $\frac{1}{N} \Z$ extra invariant of principal shift invariant spaces of $L^2(\R)$ using the Zak transform of the group $\frac{1}{N} \Z$. The statement is the following:

\begin{theorem} \label{Th:1-1}
	Let $\psi\neq 0, \psi\in L^2(\R)$ and $N\in \N, N>1.$ The following are equivalent:
	
	(a) $\langle \psi \rangle_\Z$ is $\frac{1}{N} \Z$ invariant.
	
	(b) $Z_N(\psi)(x, \xi + p)\, Z_N(\psi)(y, \xi+q) = 0$ a.e. $x,y \in [0\,, 1/N)$, a.e. $\xi \in [0, 1)$, for all
	$p, q = 0, 1, \dots N-1, p \neq q\,.$ 
\end{theorem}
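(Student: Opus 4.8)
The plan is to reduce Theorem~\ref{Th:1-1} to the periodization criterion of \cite{SW11} recalled in the introduction, which already gives that (a) is equivalent to $P_{\psi,N}(\xi)\,P_{\psi,N}(\xi+p)=0$ a.e.\ for all $p=1,\dots,N-1$. The central step, and the only place where the Zak transform enters substantively, is to relate $Z_N(\psi)$, which is built from $\psi$, to $P_{\psi,N}$, which is built from $\widehat\psi$. Applying the Poisson summation formula to the definition \eqref{Eq:ZakN} I expect to obtain the intertwining identity
\[
Z_N(f)(x,\xi)=\sum_{m\in\Z}\widehat f(\xi+Nm)\,e^{2\pi i x(\xi+Nm)},
\]
first for $f$ in a dense subclass of $L^2(\R)$ and then for all $f$ by the isometry property recalled after \eqref{Eq:ZakN}; this is a classical fact about the Zak transform and can be cited from \cite{Jan88}. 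Since $\{e^{2\pi i Nmx}\}_{m\in\Z}$ is a complete orthogonal system in $L^2([0,1/N))$, this identity exhibits $\{\widehat\psi(\zeta+Nm)\}_{m\in\Z}$ as the Fourier coefficients of the slice $x\mapsto Z_N(\psi)(x,\zeta)$. Consequently, for each fixed $\zeta$ the slice $Z_N(\psi)(\cdot,\zeta)$ vanishes a.e.\ on $[0,1/N)$ if and only if $\widehat\psi(\zeta+Nm)=0$ for all $m$, i.e.\ if and only if $P_{\psi,N}(\zeta)=0$. Call this the \emph{vanishing criterion}.

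Next I would rewrite the criterion of \cite{SW11} in a form that matches the ranges appearing in (b). Since $P_{\psi,N}$ is $N$-periodic, writing any $\xi\in\R$ as $\eta+j$ with $\eta\in[0,1)$ and $j\in\{0,\dots,N-1\}$ shows that ``$P_{\psi,N}(\xi)\,P_{\psi,N}(\xi+p)=0$ a.e.\ $\xi$ for $p=1,\dots,N-1$'' is equivalent to
\[
P_{\psi,N}(\xi+p)\,P_{\psi,N}(\xi+q)=0\quad\text{a.e. }\xi\in[0,1),\ \text{for all } p\neq q \text{ in } \{0,\dots,N-1\}.
\]
The only thing to check here is the elementary bookkeeping that, as $p$ runs over $\{1,\dots,N-1\}$ and $j$ over $\{0,\dots,N-1\}$, the pairs $\bigl(j,(j+p)\bmod N\bigr)$ exhaust all ordered pairs of distinct residues modulo $N$.

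Finally I would establish that (b) is equivalent to this reformulated criterion, which finishes the proof by chaining (a) $\iff$ \cite{SW11} $\iff$ reformulation $\iff$ (b). Fix $\xi\in[0,1)$ and $p\neq q$. By the vanishing criterion, $P_{\psi,N}(\xi+p)\,P_{\psi,N}(\xi+q)=0$ holds precisely when $Z_N(\psi)(\cdot,\xi+p)=0$ a.e.\ or $Z_N(\psi)(\cdot,\xi+q)=0$ a.e.\ on $[0,1/N)$. On the other hand, for $g,h\in L^2([0,1/N))$ one has $g(x)h(y)=0$ for a.e.\ $(x,y)$ if and only if $g=0$ a.e.\ or $h=0$ a.e., for otherwise both $\{g\neq 0\}$ and $\{h\neq 0\}$ would have positive measure and their product would have positive measure in $[0,1/N)^2$ where $g(x)h(y)\neq 0$. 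Taking $g=Z_N(\psi)(\cdot,\xi+p)$ and $h=Z_N(\psi)(\cdot,\xi+q)$ identifies, at this $\xi$, the vanishing of the product in (b) with $P_{\psi,N}(\xi+p)\,P_{\psi,N}(\xi+q)=0$; since only finitely many pairs $(p,q)$ occur, intersecting the corresponding full-measure sets of $\xi$ promotes this to the almost-everywhere statements in (b) and in the reformulated criterion.

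The hard part is the first step: making the intertwining identity rigorous on all of $L^2(\R)$, i.e.\ justifying Poisson summation on a dense class and the passage to the $L^2$ limit so that the vanishing criterion holds for almost every $\zeta$. Once that identity is in hand, the remainder is a routine combination of completeness of the exponentials, the product-space lemma, and the periodicity bookkeeping.
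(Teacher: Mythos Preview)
Your proof is correct, but it takes a genuinely different route from the paper's. You reduce everything to the periodization criterion of \cite{SW11}: using the intertwining identity (which is exactly the paper's Proposition~\ref{Pro:2-3}) together with Parseval on $L^2([0,1/N))$ you obtain $\|Z_N(\psi)(\cdot,\zeta)\|_{L^2([0,1/N))}^2=\tfrac1N P_{\psi,N}(\zeta)$ for a.e.\ $\zeta$, and then a short product-space and periodicity argument identifies (b) with the \cite{SW11} condition. The paper, by contrast, does not invoke \cite{SW11} at all. It works intrinsically with the Zak transform: for (a)$\Rightarrow$(b) it uses the multiplier characterization $Z_1(T_{1/N}\psi)=m\,Z_1(\psi)$ (Corollary~\ref{Cor:2-7}), writes $Z_1$ as a sum of $Z_N$-values (Proposition~\ref{Pro:2-1}), and then proves a combinatorial identity (Lemma~\ref{Lem:3-1}) on the discrete Fourier coefficients $A_p$ of the sequence $q\mapsto Z_N(\psi)(x,\xi+q)$ that forces the product $\alpha_p\alpha_q$ to vanish when $p\neq q$; for (b)$\Rightarrow$(a) it constructs an explicit multiplier from the support sets $S_\psi^{(q)}$. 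Your argument is shorter and more transparent for $\R$, and has the pleasant side effect of showing directly that (b) is equivalent to the \cite{SW11} condition. The paper's argument, though longer, is self-contained and, more importantly, is structured so that every step carries over verbatim to the LCA setting of Theorem~\ref{Th:1-2}, where an off-the-shelf periodization criterion in exactly the needed form is less readily available.
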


Let $I_1=[0,1/2)\cup[1,3/2)$ and $I_2=[0,1/2)\cup[3/2,2).$ The functions $\psi_1$ and $\psi_2$ given by $\widehat{\psi_1} = \chi_{I_1}$ and $\widehat{\psi_2} = \chi_{I_2}$ are exhibited in \cite{SW11} to show that although both allow sampling formulas with the lattice $\frac{1}{2} \Z$, the second one is better with respect to sampling since it also allows sampling with the coarser lattice $\Z,$ while the first one does not.

\

We can witness, using Theorem \ref{Th:1-1}, that they are also different for $\frac{1}{2} \Z$ extra invariance. To see this, we borrow from Proposition \ref{Pro:2-3} the following formula for the Zak transform of a function $f \in L^1(\R)$ for the group $\frac{1}{N} \Z, N\in \N:$
\begin{equation} \label{Eq_:ZakN}
    Z_N(f)(x,\xi) = \sum_{k\in \Z} \widehat f (\xi + Nk) e^{2\pi i x \cdot (\xi + Nk) }\,, \quad x,\ \xi \in \R\,.
\end{equation}
Using \eqref{Eq_:ZakN} it is easy to see that for $\xi \in [0,1)\,,$ and $x, y \in \R\,,$
$$
Z_2(\psi_1)(x, \xi) = e^{2\pi i x\cdot \xi} \chi_{[0,1/2)}(\xi)\,, \qquad \mbox{and} \qquad Z_2(\psi_1)(y, \xi + 1) = e^{2\pi i x\cdot (\xi + 1)} \chi_{[0,1/2)}(\xi)\,.
$$
Since $Z_2(\psi_1)(x, \xi) Z_2(\psi_1)(y, \xi + 1) \neq 0$ for all $x,y\in \R$ and all $\xi \in [0, 1/2)\,,$ we deduce from Theorem \ref{Th:1-1} that $\langle \psi_1 \rangle_{\Z}$ is not $\frac12 \Z$ invariant. On the other hand, using again \eqref{Eq_:ZakN}, for all $\xi \in [0,1)\,,$ and $x, y \in \R\,,$ we have
$$
Z_2(\psi_2)(x, \xi) = e^{2\pi i x\cdot \xi} \chi_{[0,1/2)}(\xi)\,, \qquad \mbox{and} \qquad Z_2(\psi_2)(y, \xi + 1) = e^{2\pi i x\cdot (\xi + 1)} \chi_{[1/2,1)}(\xi)\,
$$
Hence, $Z_2(\psi_2)(x, \xi) Z_2(\psi_2)(y, \xi + 1) = 0$ for all $\xi \in [0,1)\,,$ and $x, y \in \R\,.$ This shows, by Theorem \ref{Th:1-1}, that $\langle \psi_2 \rangle_{\Z}$ is  $\frac12 \Z$ invariant.

\

Theorem 1.1 will be proved in Section \ref{Proof}. Section \ref{Preliminaries} contains the tools needed for the proof. In Section \ref{ProofLCA} we generalize Theorem 1.1 to the case of locally compact abelian (LCA) groups. We need an LCA group $G$ and two lattices $\mathcal K \subset \mathcal L$ of $G$. The dual group of $G$ will be denoted by $\widehat G$ and $\mathcal L^\perp \subset \mathcal K^\perp$ denote the dual lattices of $\mathcal L$ and $\mathcal K$ respectively. We denote by $C_{\mathcal L}$ a measurable tiling set of $G$ by $\mathcal L$, and similarly by $C_{\mathcal K^\perp}$ a measurable tiling set of $\widehat G$ by $\mathcal  K^\perp$. 

We also need the notion of Zak transform with respect to a lattice that the reader can find in \eqref{Def:4-5}.

\begin{theorem} \label{Th:1-2}
	Let $G$ be an LCA group and let $\mathcal K \subset \mathcal L$ be two lattices in $G$. Let $\psi\neq 0, \psi\in L^2(G)$. The following are equivalent:
	
	(a) $\langle \psi \rangle_{\mathcal K}$ is $\mathcal L$ invariant.
	
	(b) $Z_{\mathcal L}(\psi)(\alpha + \beta_1, x) \, Z_{\mathcal L}(\psi)(\alpha + \beta_2, y) = 0$ for all $\beta_1, \beta_2$ such that $[\beta_1] \neq [\beta_2]$ in $\mathcal K^\perp /\mathcal L^\perp$, and a.e. $x,y \in C_{\mathcal L},\, \alpha \in C_{\mathcal K^\perp}.$
\end{theorem}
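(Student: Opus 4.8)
The plan is to transport the invariance question through the $\mathcal{L}$-Zak transform and then analyze it fiber by fiber in the frequency variable. First I would reduce to finitely many membership tests: since $\langle\psi\rangle_{\mathcal K}$ is by construction $\mathcal K$-invariant, and $\mathcal L$ is generated by $\mathcal K$ together with a finite set $D$ of coset representatives of $\mathcal L/\mathcal K$, and since translations commute, the space $\langle\psi\rangle_{\mathcal K}$ is $\mathcal L$-invariant if and only if $T_d\psi\in\langle\psi\rangle_{\mathcal K}$ for every $d\in D$. Next I would record the properties of $Z_{\mathcal L}$ from Section~\ref{Preliminaries}: it is an isometric isomorphism of $L^2(G)$ onto $L^2(C_{\mathcal L^\perp}\times C_{\mathcal L})$, it is $\mathcal L^\perp$-periodic in the frequency variable, it intertwines translation and space-translation via $Z_{\mathcal L}(T_d f)(\omega,x)=Z_{\mathcal L}(f)(\omega,x-d)$, and it is quasi-periodic in $x$ under $\mathcal L$. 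Combining the last two, for $d\in\mathcal L$ one gets the clean multiplier identity
\[
Z_{\mathcal L}(T_d\psi)(\omega,x)=\overline{\omega(d)}\,Z_{\mathcal L}(\psi)(\omega,x).
\]

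The technical heart, which I expect to be the main obstacle, is the description of $\langle\psi\rangle_{\mathcal K}$ inside the Zak domain: I would show that $g\in\langle\psi\rangle_{\mathcal K}$ if and only if $Z_{\mathcal L}(g)=m\,Z_{\mathcal L}(\psi)$ for some $\mathcal K^\perp$-periodic multiplier $m$. Since $\mathcal K\subset\mathcal L$, each generator satisfies $Z_{\mathcal L}(T_\kappa\psi)=\overline{\omega(\kappa)}\,Z_{\mathcal L}(\psi)$, so this amounts to identifying the closed span of $\{\omega\mapsto\overline{\omega(\kappa)}:\kappa\in\mathcal K\}$ with all $\mathcal K^\perp$-periodic multipliers. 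The functions $\omega\mapsto\omega(\kappa)$ are precisely the characters of the compact quotient $\widehat G/\mathcal K^\perp$, so by Stone--Weierstrass (or Plancherel on that quotient) their span is dense in $L^2$ with respect to the finite weight $\alpha\mapsto\int_{C_{\mathcal L}}|Z_{\mathcal L}(\psi)(\alpha+\cdot\,,x)|^2\,dx$; this density, together with the bookkeeping of the fundamental domains and the quasi-periodicity, is the delicate step. (This is presumably isolated as a preliminary lemma, which I would invoke.)

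Finally I would combine the pieces, writing $\omega=\alpha+\beta$ with $\alpha\in C_{\mathcal K^\perp}$ and $\beta$ a representative of $\mathcal K^\perp/\mathcal L^\perp$. By the above, $T_d\psi\in\langle\psi\rangle_{\mathcal K}$ means there is a $\mathcal K^\perp$-periodic $m_d$ with $m_d(\alpha+\beta)=\overline{(\alpha+\beta)(d)}$ holding on every $x$-slice where $Z_{\mathcal L}(\psi)(\alpha+\beta,\cdot)$ is non-null. Because $m_d$ is $\mathcal K^\perp$-periodic its value depends only on $\alpha$, whereas $\overline{(\alpha+\beta)(d)}=\overline{\alpha(d)}\,\overline{\beta(d)}$ genuinely depends on $\beta$. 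Hence if, on a positive-measure set of $\alpha$, two representatives $\beta_1\neq\beta_2$ had non-null slices, solvability for all $d\in D$ would force $\beta_1(d)=\beta_2(d)$ for every $d\in D$; since $\beta_1-\beta_2\in\mathcal K^\perp$ already annihilates $\mathcal K$, it would then annihilate all of $\mathcal L$, i.e. $\beta_1-\beta_2\in\mathcal L^\perp$ and $[\beta_1]=[\beta_2]$, a contradiction. Thus $\mathcal L$-invariance forces at most one non-null slice per a.e.\ $\alpha$, which is exactly condition (b). Conversely, under (b) the compatibility requirement is vacuous, so for each $d$ one may define $m_d$ on the (at most one) non-null slice and extend $\mathcal K^\perp$-periodically, yielding $T_d\psi\in\langle\psi\rangle_{\mathcal K}$ for all $d\in D$ and hence $\mathcal L$-invariance. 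The concluding duality argument is short and purely algebraic; the genuine work lies in the multiplier characterization of the third paragraph.
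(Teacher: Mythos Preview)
Your proposal is correct, and for the implication (b) $\Rightarrow$ (a) it matches the paper's argument essentially verbatim: both you and the paper construct, for each $\ell\in\mathcal L$, a $\mathcal K^\perp$-periodic multiplier by assigning the value $(-\ell,\alpha+\beta)$ on the unique non-null slice and appeal to the multiplier characterization of $\langle\psi\rangle_{\mathcal K}$.

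Where you differ is in (a) $\Rightarrow$ (b). The paper works with $Z_{\mathcal K}$ (Corollary~\ref{Cor:4-6}), introduces the finite Fourier coefficients
\[
A_{[\ell]}(\alpha,x)=\sum_{[\beta]\in\mathcal K^\perp/\mathcal L^\perp}(-[\ell],[\beta])\,Z_{\mathcal L}(\psi)(\alpha+\beta,x),
\]
proves a product identity $A_{[\ell_1]}(\alpha,x)A_{[\ell_2]}(\alpha,y)=A_{[s_1]}(\alpha,x)A_{[s_2]}(\alpha,y)$ whenever $[\ell_1]+[\ell_2]=[s_1]+[s_2]$ (Lemma~\ref{Lem:5-1}), and then expands $Z_{\mathcal L}(\psi)(\alpha+\beta_1,x)\,Z_{\mathcal L}(\psi)(\alpha+\beta_2,y)$ via Fourier inversion to reach an orthogonality sum that vanishes when $[\beta_1]\neq[\beta_2]$. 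Your route bypasses this machinery entirely: you state the multiplier characterization directly at the level of $Z_{\mathcal L}$ (which, as you note, is the only step requiring care, and follows from Proposition~\ref{Pro:4-5} together with Proposition~\ref{Pro:4-3} since $\mathcal L^\perp\subset\mathcal K^\perp$), and then argue pointwise: the $\mathcal K^\perp$-periodic multiplier $m_d$ must equal $(-d,\alpha+\beta)$ on each non-null slice, but the left side is independent of $\beta$ while the right side is not, unless $\beta_1-\beta_2$ annihilates every $d\in\mathcal L$. This is shorter and more transparent; the paper's approach, by contrast, makes the underlying harmonic-analytic structure (the finite Fourier transform on $\mathcal L/\mathcal K$) explicit, which may be useful in situations where one wants quantitative information about the products rather than just their vanishing.
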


The proof of Theorem \ref{Th:1-2} will be given in Section \ref{ProofLCA}. Subsections \ref{Background}, \ref{ZakLCA}, and \ref{PrincipalLCA} contain the tools needed for the proof.

\
 
{\small {\bf Acknowledgements}. This project has received funding from the European Union's Horizon 2020 research and innovation programme under the Marie Sklodowska-Curie grant agreement No 777822. D. Barbieri and E.
Hern\'andez were supported by Grant MTM2016-76566-P (Spain). C. Mosquera was  supported by Grants UBACyT 20020170100430BA, PICT 2014--1480 and PIP 112201501003553CO (Argentina). E. Hern\'andez would like to thank the Isaac Newton Institute of Mathematical Sciences for support and hospitality during the programme \emph {Approximation, sampling and compression in data science}, when part of this work was undertaken. This programme was supported by EPSRC Grant Number EP/R014604/1.

We thank Carlos Cabrelli and Victoria Paternostro for reading the manuscript and providing some interesting comments.

\newpage

\section{Preliminaries} \label{Preliminaries}

\subsection{Properties of the Zak transform.} \label{Zak}

Recall from \eqref{Eq:ZakN} that the Zak transform of a function $f \in L^1(\R)$ for the group $\frac{1}{N} \Z, N\in \N\,,$ is given by 
\begin{equation} \label{Eq:ZakN2}
Z_N(f)(x,\xi) := \frac{1}{N} \sum_{k\in \Z} f(x + \frac{k}{N}) e^{-2\pi i \frac{k}{N}\xi}\,, \ x, \xi \in \R\,.
\end{equation}
It can be extended to be an isometric isomorphism from $L^2(\R)$ onto $L^2([0,1/N)\times [0,N)).$ It follows from the definition that if $\ell \in \Z$ and $x, \xi \in \R$
\begin{equation} \label{Eq:property1}
Z_N (f)(x , \xi +\ell N) = Z_N (f)(x, \xi)\,,
\end{equation}
and
\begin{equation} \label{Eq:property2}
Z_N (f)(x + \frac{\ell}{N} \,, \xi) = e^{2 \pi i \frac{\ell}{N} \xi }Z_N(f) (x, \xi)\,.
\end{equation}
Therefore, $Z_N (f)(x, \xi)$ is determined as soon as we know its values in the rectangle $[0,1/N)\times [0,N).$

\

The following result relates the usual Zak transform $Z_1$ with the Zak transform defined by \eqref{Eq:ZakN2}.

\begin{proposition} \label{Pro:2-1}
	For $f\in L^2(\R)$, $x \,, \xi \in \R$, and $N \in \N$,
	$$
	Z_1(f)(x,\xi) = \sum_{q=0}^{N-1} Z_N(f)(x, \xi +q )\,.
	$$
\end{proposition}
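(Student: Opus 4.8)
The plan is to verify the identity by a direct computation from the definition \eqref{Eq:ZakN2}, the only substantive ingredient being the orthogonality of $N$-th roots of unity. Since $f\mapsto Z_1(f)$ and $f\mapsto \sum_{q=0}^{N-1}Z_N(f)(\cdot,\cdot+q)$ are both bounded linear maps on $L^2(\R)$ (the latter because $Z_N$ is an isometric isomorphism and the outer sum is finite), it suffices to establish the formula on the dense subspace $L^1(\R)\cap L^2(\R)$. On this subspace the defining series converge absolutely for a.e. $x$: indeed $\int_0^{1/N}\sum_{k\in\Z}|f(x+\tfrac{k}{N})|\,dx=\norm{f}_{L^1}<\infty$ forces $\sum_{k\in\Z}|f(x+\tfrac{k}{N})|<\infty$ for a.e.\ $x$, so all pointwise manipulations are legitimate there.

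First I would fix $f\in L^1(\R)\cap L^2(\R)$, insert \eqref{Eq:ZakN2} into the right-hand side, and exchange the finite sum over $q$ with the absolutely convergent sum over $k$ to obtain
$$
\sum_{q=0}^{N-1} Z_N(f)(x,\xi+q)=\frac{1}{N}\sum_{k\in\Z} f\Bigl(x+\frac{k}{N}\Bigr)\, e^{-2\pi i \frac{k}{N}\xi}\left(\sum_{q=0}^{N-1} e^{-2\pi i \frac{k}{N} q}\right).
$$
The heart of the argument is the inner geometric sum. Writing $\omega_k=e^{-2\pi i k/N}$, I would note that $\sum_{q=0}^{N-1}\omega_k^q$ equals $N$ when $k\in N\Z$ (so $\omega_k=1$) and vanishes otherwise, since then $\sum_{q=0}^{N-1}\omega_k^q=(\omega_k^N-1)/(\omega_k-1)=0$ because $\omega_k^N=e^{-2\pi i k}=1$ while $\omega_k\neq 1$.

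Consequently only the indices $k=N\ell$, $\ell\in\Z$, survive; for these $f(x+\tfrac{k}{N})=f(x+\ell)$ and $e^{-2\pi i (k/N)\xi}=e^{-2\pi i \ell\xi}$, and the prefactor $1/N$ cancels the surviving value $N$ of the inner sum, leaving
$$
\sum_{q=0}^{N-1} Z_N(f)(x,\xi+q)=\sum_{\ell\in\Z} f(x+\ell)\, e^{-2\pi i \ell \xi}=Z_1(f)(x,\xi),
$$
which is precisely \eqref{Eq:ZakN2} in the case $N=1$. Finally I would remove the auxiliary hypothesis: for arbitrary $f\in L^2(\R)$ pick $f_n\in L^1(\R)\cap L^2(\R)$ with $f_n\to f$ in $L^2(\R)$ and pass to the limit, both sides depending continuously on $f$. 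I expect the only delicate point to be the convergence and interchange of the series, which is exactly what the reduction to $L^1\cap L^2$ and the density argument dispose of; the root-of-unity cancellation itself, though the crux of the identity, is entirely elementary.
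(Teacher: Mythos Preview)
Your proof is correct and follows essentially the same approach as the paper: reduce by density to $L^1\cap L^2$, expand from the definition, exchange the finite $q$-sum with the $k$-sum, and use the orthogonality of $N$-th roots of unity so that only the terms $k\in N\Z$ survive. Your write-up is in fact more careful than the paper's about justifying the density reduction and the legitimacy of interchanging the sums.
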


\begin{proof}
	By density, it is enough to prove the result for $f\in L^1(\R)\cap L^2(\R).$ Using definition \eqref{Eq:ZakN2} and collecting terms we obtain
	\begin{eqnarray*}
		\sum_{q=0}^{N-1} Z_N(f)(x, \xi +q ) &=&
		\sum_{q=0}^{N-1} \frac{1}{N} \sum_{k\in \Z} f(x + \frac{k}{N}) e^{-2\pi i \frac{k}{N}(\xi+ q)} \\
		&=& \sum_{k\in \Z} \frac{1}{N} \left(\sum_{q=0}^{N-1} e^{-2\pi i \frac{k}{N}q}  \right) f(x + \frac{k}{N}) e^{-2\pi i \frac{k}{N}\xi}\,.
	\end{eqnarray*}
Let $\displaystyle \Phi_N(k) := \frac{1}{N} \left(\sum_{q=0}^{N-1} e^{-2\pi i \frac{k}{N}q}  \right)$. If $K=\ell N$, $\Phi_N(k)=1.$ On the other hand if $k$ is not an integer multiple of $N$, using the sum of a geometric progression, $\Phi_{N}(k) = 0.$ Therefore,
$$
\sum_{q=0}^{N-1} Z_N(f)(x, \xi +q ) = \sum_{\ell\in \Z} f(x+\ell) e^{-2\pi i \ell \xi} = Z_1(f)(x,\xi).
$$
\end{proof}

Recall that $T_x(f)(y) = f(y-x)$ denotes de translation by $x\in \R.$
\begin{proposition} \label{Pro:2-2}
	For $f\in L^2(\R)$, $x \,, \xi \in \R$, and $N \in \N$,
	$$
	Z_1(T_{1/N}(f))(x,\xi) = \sum_{q=0}^{N-1} e^{-\frac{2 \pi i (\xi + q)}{N}} Z_N(f)(x, \xi +q )\,.
	$$
\end{proposition}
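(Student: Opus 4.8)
The plan is to reduce everything to Proposition \ref{Pro:2-1} by first understanding how the $N$-Zak transform $Z_N$ interacts with the translation $T_{1/N}$. Since $T_{1/N}$ shifts by exactly one step of the lattice $\frac1N\Z$ that defines $Z_N$, I expect $T_{1/N}$ to act on $Z_N(f)$ as a pure modulation. Concretely, for $f\in L^1(\R)\cap L^2(\R)$ I would start from the defining series
$$
Z_N(T_{1/N}f)(x,\xi) = \frac1N\sum_{k\in\Z} f\Bigl(x+\frac{k-1}{N}\Bigr)\, e^{-2\pi i \frac{k}{N}\xi}
$$
and reindex by $j=k-1$. Pulling out the factor $e^{-2\pi i \xi/N}$ that this shift produces in the exponential, the remaining sum is exactly $Z_N(f)(x,\xi)$, so I obtain the modulation identity
$$
Z_N(T_{1/N}f)(x,\xi) = e^{-2\pi i \xi/N}\, Z_N(f)(x,\xi).
$$
Absolute convergence on $L^1\cap L^2$ justifies the reindexing, and the identity then extends to all $f\in L^2(\R)$ by density, since $Z_N$ and $T_{1/N}$ are both $L^2$-isometries.

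Next I would apply Proposition \ref{Pro:2-1} to the function $T_{1/N}f$ in place of $f$, giving
$$
Z_1(T_{1/N}f)(x,\xi) = \sum_{q=0}^{N-1} Z_N(T_{1/N}f)(x,\xi+q).
$$
Substituting the modulation identity (with $\xi$ replaced by $\xi+q$) into each term on the right yields
$$
Z_1(T_{1/N}f)(x,\xi) = \sum_{q=0}^{N-1} e^{-2\pi i (\xi+q)/N}\, Z_N(f)(x,\xi+q),
$$
which is precisely the claimed formula.

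I do not foresee a genuine obstacle: the argument is a one-line index shift followed by an application of the already-established Proposition \ref{Pro:2-1}. The only point requiring a modicum of care is the legitimacy of the reindexing and the passage from $L^1\cap L^2$ to $L^2$; both are handled exactly as in the proof of Proposition \ref{Pro:2-1}, using the density of $L^1\cap L^2$ in $L^2$ and the $L^2$-boundedness of $Z_N$ and $T_{1/N}$.
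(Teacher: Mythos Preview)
Your proposal is correct and follows essentially the same route as the paper: both combine Proposition~\ref{Pro:2-1} with the quasi-periodicity identity \eqref{Eq:property2}. The only cosmetic difference is that you rederive the modulation $Z_N(T_{1/N}f)(x,\xi)=e^{-2\pi i\xi/N}Z_N(f)(x,\xi)$ by reindexing and then apply Proposition~\ref{Pro:2-1} to $T_{1/N}f$, whereas the paper first writes $Z_1(T_{1/N}f)(x,\xi)=Z_1(f)(x-\tfrac1N,\xi)$, applies Proposition~\ref{Pro:2-1} to $f$ at the shifted point, and then invokes \eqref{Eq:property2}.
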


\begin{proof}
	By density, it is enough to prove the result for $f\in L^1(\R)\cap L^2(\R).$ Now, the result follows by using Proposition \ref{Pro:2-1} and equation \eqref{Eq:property2}:
	\begin{eqnarray*}
		Z_1(T_{1/N}(f)) (x, \xi)&=& Z_1(f)(x-\frac1N, \xi) = 
		\sum_{q=0}^{N-1} Z_N(f)(x -\frac1N, \xi +q ) \\
		&=& \sum_{q=0}^{N-1} e^{- 2\pi i\frac1N (\xi + q)}Z_N(f) (x, \xi+q)\,.
	\end{eqnarray*}
\end{proof}

The following result will be needed in the sequel. It gives a way to compute the Zak transform of a function using its Fourier transform.
\begin{proposition} \label{Pro:2-3}
	For $f\in L^2(\R)$, $x \,, \xi \in \R$, and $N \in \N$,
	$$
	Z_N(f)(x,\xi) = \sum_{k\in \Z} \widehat f (\xi + Nk) e^{2\pi i x \cdot (\xi + Nk)}\,.
	$$
\end{proposition}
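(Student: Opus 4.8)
The plan is to establish the formula $Z_N(f)(x,\xi) = \sum_{k\in\Z} \widehat f(\xi+Nk)\,e^{2\pi i x\cdot(\xi+Nk)}$ by reducing to the classical case $N=1$ and exploiting the Fourier-inversion structure already implicit in the Zak transform. First I would note, as in the earlier propositions, that by density it suffices to prove the identity for $f\in L^1(\R)\cap L^2(\R)$ with sufficient decay (e.g.\ $f$ in the Schwartz class or satisfying a Poisson-summation-type hypothesis), and then pass to the general $L^2$ statement by continuity of both sides in the appropriate topology. The core analytic input is the Poisson summation formula: for the group $\Z$ one has the standard identity $\sum_{\ell\in\Z} f(x+\ell)\,e^{-2\pi i\ell\xi} = \sum_{k\in\Z}\widehat f(\xi+k)\,e^{2\pi i x(\xi+k)}$, which is exactly the statement that $Z_1(f)(x,\xi)$ admits a Fourier-series expansion whose coefficients are samples of $\widehat f$.

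The cleanest route is to prove the $N=1$ case first and then bootstrap to general $N$ using Proposition \ref{Pro:2-1}. For $N=1$, I would start from the definition $Z_1(f)(x,\xi)=\sum_{\ell\in\Z} f(x+\ell)e^{-2\pi i\ell\xi}$, view it as a $1$-periodic function of $\xi$, and compute its Fourier coefficients in $\xi$; a short calculation with $\widehat f(\eta)=\int_\R f(t)e^{-2\pi i t\eta}\,dt$ and the substitution $t=x+\ell$ identifies the $k$-th coefficient with $\widehat f(\xi+k)e^{2\pi i x(\xi+k)}$, giving the desired expansion. To descend to general $N$, I would combine Proposition \ref{Pro:2-1}, which gives $Z_1(f)(x,\xi)=\sum_{q=0}^{N-1}Z_N(f)(x,\xi+q)$, with the periodicity and quasi-periodicity relations \eqref{Eq:property1} and \eqref{Eq:property2}; rescaling the argument or applying the dilation behaviour of the Zak transform lets one isolate $Z_N(f)$ and read off that its Fourier-type expansion keeps only the frequencies $\xi+Nk$, yielding precisely $\sum_{k\in\Z}\widehat f(\xi+Nk)e^{2\pi i x(\xi+Nk)}$.

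The main obstacle I anticipate is rigorously justifying the interchange of summation and integration (equivalently, the convergence of the Poisson summation formula) and then extending the pointwise identity from a dense subclass to all of $L^2(\R)$. On the dense class the manipulations are formal, but the right-hand side $\sum_k \widehat f(\xi+Nk)e^{2\pi i x(\xi+Nk)}$ must be understood as a limit in $L^2$ of the fundamental domain rather than as an absolutely convergent pointwise sum, since for general $f\in L^2(\R)$ neither $f$ nor $\widehat f$ need be summable along the coset $\{\xi+Nk\}$. The resolution is to interpret both sides as elements of $L^2([0,1/N)\times[0,N))$ and to invoke the isometry property of $Z_N$ stated after \eqref{Eq:ZakN2}: since both $f\mapsto Z_N(f)$ and $f\mapsto \sum_k\widehat f(\cdot+Nk)e^{2\pi i\,\cdot\,(\cdot+Nk)}$ are bounded (indeed isometric, up to the Plancherel normalization) linear maps agreeing on a dense subspace, they agree everywhere. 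I would therefore phrase the final step as a density-and-continuity argument, making precise that the equality holds in $L^2$ and hence almost everywhere after passing to a subsequence.
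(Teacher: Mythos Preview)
Your plan is workable, but it differs from the paper's argument and the key ``isolation'' step is left vague. The paper does \emph{not} reduce to $N=1$; it applies the Poisson Summation Formula directly at level $N$ by introducing the auxiliary function $F_{x,\xi}(t)=f(x+t/N)e^{-2\pi i t\xi/N}$, so that $Z_N(f)(x,\xi)=\tfrac1N\sum_k F_{x,\xi}(k)=\tfrac1N\sum_k\widehat{F_{x,\xi}}(k)$, and a change of variables gives $\widehat{F_{x,\xi}}(k)=N\,\widehat f(\xi+Nk)e^{2\pi i x(\xi+Nk)}$. This is a one-step computation on a dense class (they take $f\in C_c(\R)$) and avoids any bootstrapping.

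Your route via Proposition~\ref{Pro:2-1} is valid but requires an extra argument you only hint at: from $\sum_{q=0}^{N-1}Z_N(f)(x,\xi+q)=\sum_{q=0}^{N-1}\sum_m\widehat f(\xi+q+Nm)e^{2\pi i x(\xi+q+Nm)}$ you cannot immediately match terms. You must observe that under $x\mapsto x+j/N$ both the $q$-th summand on the left (by \eqref{Eq:property2}) and the $q$-th summand on the right pick up the same factor $e^{2\pi i j(\xi+q)/N}$; running $j=0,\dots,N-1$ gives a DFT system that inverts to yield the termwise equality. Alternatively, the dilation you allude to is cleaner: setting $g(y)=f(y/N)$ gives $Z_N(f)(x,\xi)=\tfrac1N Z_1(g)(Nx,\xi/N)$ and $\widehat g(\eta)=N\widehat f(N\eta)$, whence the $N=1$ identity transfers directly. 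Either completion is fine, but you should make one of them explicit rather than leaving ``lets one isolate $Z_N(f)$'' as a placeholder. Your discussion of the density-and-continuity extension to $L^2$ is sound and more detailed than the paper's.
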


\begin{proof}
	It is enough to show the result for $f\in C_c(\R)$, the continuous functions with compact support in $\R$.
	For each $x,\xi \in \R$ define
	$\displaystyle F_{x,\xi}(t) := f(x + \frac{t}{N}) e^{-2\pi i \frac{t}{N}\xi}\,, \  t \in \R\,.$ By the Poisson Summation Formula,
	\begin{equation} \label{Eq:2-4}
	Z_N(f)(x,\xi) = \frac1N \sum_{k\in \Z} F_{x,\xi}(k) = \frac1N \sum_{k\in \Z} \widehat{F_{x,\xi}}(k)\,.
	\end{equation}
	With the change of variables $x+\frac{t}{N} =z$ we obtain
	\begin{eqnarray*}
		\widehat{F_{x,\xi}}(k) &=& \int_{\R} F_{x,\xi}(t) e^{-2\pi i k t} \, dt = \int_{\R} f(x + \frac{t}{N}) e^{-2\pi i \frac{t}{N}\xi}\, e^{-2\pi i k t} \, dt \\
		&=& N \int_{\R} f(z)\, e^{-2\pi i (z - x)\xi} e^{-2\pi i k (z-x)N}\,dz \\
		&=& N e^{2\pi i x \dot (\xi + Nk)} \int_{\R} f(z)\, e^{-2\pi i z \cdot (\xi + Nk)}\,dz \\
		&=& N e^{2\pi i x \dot (\xi + Nk)} \widehat f(\xi + Nk)\,.
	\end{eqnarray*}
    The result follows by replacing this equality in \eqref{Eq:2-4}.
\end{proof}

\begin{Remark}
	Propositions \ref{Pro:2-1} and \ref{Pro:2-2} can also be proved using Proposition \ref{Pro:2-3}. We leave the details for the reader.
\end{Remark}

\subsection{Principal invariant spaces.}  \label{Invariant}

We start by giving a condition to determine if a principal shift invariant subspace is also $\frac1N \Z$ invariant.

\begin{proposition} \label{Pro:2-5}
	Let $\psi \neq 0, \psi \in L^2(\R),$ and $N\in \N, N > 1.$ The following are equivalent:
	
	(a) $\langle \psi \rangle_\Z$ is $\frac1N \Z$ invariant.
	
	(b) $T_{1/N}(\psi) \in \langle \psi \rangle_\Z.$
\end{proposition}

\begin{proof}
	$(a) \Rightarrow (b)$ is clear by definition. To prove $(b) \Rightarrow (a)$ let $f\in \langle \psi \rangle_\Z.$ We have to show that $T_{k/N}(f) \in \langle \psi \rangle_\Z$ for all $k\in \Z.$ Write $k = \ell N + q, \ell \in \Z, q\in \Z, 0 \leq q \leq N-1.$ Then, $T_{k/N} (f) = T_{q/N}T_\ell (f) \in T_{q/N}(\langle \psi \rangle_\Z).$ Thus, it is enough to prove that $T_{q/N}(\langle \psi \rangle_\Z) \subset \langle \psi \rangle_\Z$ for all $q\in \Z, 0 \leq q \leq N-1.$ The result is clear for $q=0.$ If $q=1$,
	$$
	T_{1/N} (\langle \psi \rangle_\Z) \subset \langle T_{1/N}(\psi) \rangle_\Z \subset \langle \psi \rangle_\Z\,,
	$$
	since $T_{1/N}\psi \in \langle \psi \rangle_\Z$ by $(b)$. Proceed now by induction on $q$. If $T_{q/N}(\langle \psi \rangle_\Z) \subset \langle \psi \rangle_\Z$, then
	$$
	T_{\frac{q+1}{N}} (\langle \psi \rangle_\Z) = T_{\frac1N}T_{\frac{q}{N}} (\langle \psi \rangle_\Z) \subset T_{\frac1N}(\langle \psi \rangle_\Z) \subset \langle T_{1/N}(\psi) \rangle_\Z \subset \langle \psi \rangle_\Z\,,
	$$
	since $T_{1/N}\psi \in \langle \psi \rangle_\Z$ by $(b)$.
\end{proof}

The following result characterizes the elements of $\langle \psi \rangle_\Z$ in terms of a multiplier. It was first proved in \cite{dBDVR94}, Theorem 2.14 (see also Theorem 2.1 in \cite{HSWW10}).

\begin{proposition} \label{Pro:multiplier}
	Let $\psi \neq 0, \psi \in L^2(\R).$
	
	(a) If $f\in \langle \psi \rangle_\Z$, there exists a $\Z$-periodic function $m_f$ on $\R$ such that $\widehat f = m_f \widehat \psi.$
	
	(b) If $m$ is a $\Z$-periodic function on $\R$ such that $m\widehat \psi \in L^2(\R)$ then, the function $f$ defined by $\widehat f = m\widehat \psi$ belongs to $\langle \psi \rangle_\Z\,.$
\end{proposition}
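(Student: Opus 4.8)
The plan is to transfer everything to the Fourier side, where integer translates become multiplication by characters, and to recognize $\langle\psi\rangle_\Z$ as the image of a weighted $L^2$ space on the torus under an explicit isometry. Since the Fourier transform is a unitary operator on $L^2(\R)$ (Plancherel) and $\widehat{T_k\psi}(\xi)=e^{-2\pi i k\xi}\widehat\psi(\xi)$ for $k\in\Z$, a finite linear combination $\sum_k c_k T_k\psi$ has Fourier transform $p(\xi)\widehat\psi(\xi)$, where $p(\xi)=\sum_k c_k e^{-2\pi i k\xi}$ is a trigonometric polynomial, hence a $\Z$-periodic function. Thus $\widehat{\langle\psi\rangle_\Z}$ is the closure in $L^2(\R)$ of $\{p\,\widehat\psi : p \text{ a trigonometric polynomial}\}$, and the whole proposition amounts to identifying this closure with $\{m\widehat\psi : m \text{ is }\Z\text{-periodic and } m\widehat\psi\in L^2(\R)\}$.

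The key device is the periodization function
\[
w(\xi):=\sum_{k\in\Z}|\widehat\psi(\xi+k)|^2,
\]
which is $\Z$-periodic and belongs to $L^1([0,1))$ since $\int_0^1 w(\xi)\,d\xi=\|\widehat\psi\|_{L^2(\R)}^2=\|\psi\|_{L^2(\R)}^2<\infty$. For any $\Z$-periodic $m$, unfolding the integral over $\R$ into unit cells gives the identity
\[
\|m\widehat\psi\|_{L^2(\R)}^2=\int_{\R}|m(\xi)|^2|\widehat\psi(\xi)|^2\,d\xi=\int_0^1 |m(\xi)|^2\,w(\xi)\,d\xi.
\]
Consequently the map $J(m):=m\widehat\psi$ is a linear isometry from the weighted space $L^2([0,1),w\,d\xi)$ into $L^2(\R)$; being isometric on a complete space, its range is a closed subspace of $L^2(\R)$. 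Moreover $m\widehat\psi\in L^2(\R)$ holds precisely when $m\in L^2([0,1),w\,d\xi)$, so the hypothesis appearing in part (b) is exactly membership in the domain of $J$, and under $J$ the trigonometric polynomials correspond to the Fourier transforms of finite integer-translate combinations of $\psi$.

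With these pieces in place both statements follow at once. On one hand $\{p\widehat\psi\}\subset\mathrm{range}(J)$ and $\mathrm{range}(J)$ is closed, so $\widehat{\langle\psi\rangle_\Z}\subset\mathrm{range}(J)$; this yields (a), with $m_f:=J^{-1}(\widehat f)$ (defined $w\,d\xi$-a.e., equivalently a.e.\ on $\{\widehat\psi\neq0\}$, and extended periodically in any way off this set, so that $m_f$ is not unique). On the other hand, if $m$ is $\Z$-periodic with $m\widehat\psi\in L^2(\R)$ then $m\in L^2([0,1),w\,d\xi)$, and choosing trigonometric polynomials $p_n\to m$ in that weighted space makes $p_n\widehat\psi\to m\widehat\psi$ in $L^2(\R)$ by the isometry; since each $p_n\widehat\psi$ is the Fourier transform of an element of $\langle\psi\rangle_\Z$ and this space is closed, $m\widehat\psi\in\widehat{\langle\psi\rangle_\Z}$, which is (b). The one point demanding care is the density of trigonometric polynomials in $L^2([0,1),w\,d\xi)$ when the weight $w$ vanishes on a set of positive measure: I would handle it by noting that $w\,d\xi$ is a finite Borel measure on the compact circle, so continuous functions are dense in $L^2([0,1),w\,d\xi)$, while trigonometric polynomials approximate continuous functions uniformly (Fej\'er), hence also in the weighted $L^2$-norm. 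This density is the main---and essentially only genuine---obstacle; everything else is the bookkeeping of the isometry $J$.
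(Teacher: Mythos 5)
Your proof is correct, and it coincides with the standard argument: the paper does not prove Proposition \ref{Pro:multiplier} itself but quotes it from \cite{dBDVR94} (Theorem 2.14) and \cite{HSWW10} (Theorem 2.1), and the proof in those references is exactly your scheme --- the isometry $m\mapsto m\widehat\psi$ from the weighted space $L^2([0,1), w\,d\xi)$ with the periodization weight $w(\xi)=\sum_{k\in\Z}|\widehat\psi(\xi+k)|^2$, together with density of trigonometric polynomials in that space. You have also correctly isolated and handled the only delicate point (density when $w$ vanishes on a set of positive measure, via Fej\'er and the finiteness of $w\,d\xi$), so nothing is missing.
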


We will need a similar result to the one stated in the above Proposition, but in terms of multipliers of the Zak transform. It is a corollary of Proposition \ref{Pro:multiplier}.

\begin{corollary} \label{Cor:2-7}
	Let $\psi \neq 0, \psi \in L^2(\R).$
	
	(a) If $f\in \langle \psi \rangle_\Z$, there exists a $\Z$-periodic function $m_f$ on $\R$ with $m_f\widehat \psi \in L^2(\R)$ such that $Z_1(f)(x,\xi) = m_f(\xi) Z_1(\psi)(x,\xi)\,,\ a.\, e. \, x \,, \xi \in \R.$
	
	(b) If $m$ is a $\Z$-periodic function on $\R$ such that $m\widehat \psi \in L^2(\R)$ then, the function $f$ defined by $Z_1(f)(x,\xi) = m(\xi) Z_1(\psi)(x,\xi)\,,\ a.\, e.   \, x \,, \xi \in \R,$ belongs to $\langle \psi \rangle_\Z\,.$
\end{corollary}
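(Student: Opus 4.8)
The plan is to reduce both statements to the Fourier-transform multiplier result of Proposition \ref{Pro:multiplier} and then transport them to the Zak side using the formula
$$
Z_1(f)(x,\xi) = \sum_{k\in\Z} \widehat f(\xi+k)\, e^{2\pi i x(\xi+k)}\,,
$$
which is Proposition \ref{Pro:2-3} specialized to $N=1$. The mechanism is that a $\Z$-periodic multiplier acting on $\widehat f$ turns, after this summation, into the same multiplier acting on $Z_1(\psi)$: since $m(\xi+k)=m(\xi)$ for every $k\in\Z$, the factor $m(\xi)$ can be pulled out of the $k$-sum.

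For part (a) I would begin with $f\in\langle\psi\rangle_\Z$ and invoke Proposition \ref{Pro:multiplier}(a) to obtain a $\Z$-periodic $m_f$ with $\widehat f=m_f\widehat\psi$. Here $m_f\widehat\psi=\widehat f\in L^2(\R)$, so the integrability requirement on $m_f\widehat\psi$ is automatic. Substituting $\widehat f(\xi+k)=m_f(\xi+k)\widehat\psi(\xi+k)=m_f(\xi)\widehat\psi(\xi+k)$ into the displayed formula and factoring out $m_f(\xi)$ yields $Z_1(f)(x,\xi)=m_f(\xi)Z_1(\psi)(x,\xi)$ a.e.

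For part (b), given a $\Z$-periodic $m$ with $m\widehat\psi\in L^2(\R)$, I would first apply Proposition \ref{Pro:multiplier}(b) to produce a genuine $g\in\langle\psi\rangle_\Z$ with $\widehat g=m\widehat\psi$. Running the computation of part (a) on $g$ gives $Z_1(g)(x,\xi)=m(\xi)Z_1(\psi)(x,\xi)$. Because $Z_1$ is an isometric isomorphism of $L^2(\R)$, hence injective, the function $f$ characterized by $Z_1(f)=m\,Z_1(\psi)$ coincides with $g$, so $f\in\langle\psi\rangle_\Z$, as required.

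The only step needing care is the interchange of the periodic factor with the infinite sum. I would justify it fiberwise at a.e. fixed frequency $\xi$: the formula from Proposition \ref{Pro:2-3} exhibits $x\mapsto Z_1(\cdot)(x,\xi)$ as (up to the phase $e^{2\pi i x\xi}$) the element of $L^2([0,1))$ whose Fourier coefficients in $x$ are the samples $\widehat{(\cdot)}(\xi+k)$, and this lies in $L^2([0,1))$ for a.e.\ $\xi$ since $\widehat\psi,\widehat f\in L^2(\R)$. Multiplying every coefficient by the single scalar $m(\xi)$ multiplies that function by $m(\xi)$, and the isometry of $Z_1$ assembles these fiberwise identities into the asserted a.e.\ equality on $[0,1)\times[0,1)$.
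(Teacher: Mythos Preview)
Your proposal is correct and follows essentially the same route as the paper: both parts reduce to Proposition~\ref{Pro:multiplier} and then transport the multiplier relation through the Fourier-series representation of $Z_1$ from Proposition~\ref{Pro:2-3} with $N=1$, using $\Z$-periodicity of $m$ to pull it outside the sum and the injectivity of $Z_1$ to identify $f$. The only cosmetic difference in part~(b) is the order of operations---you invoke Proposition~\ref{Pro:multiplier}(b) first to name $g$ and then compute $Z_1(g)$, while the paper first recognizes $m\,Z_1(\psi)$ as $Z_1(\mathcal F^{-1}(m\widehat\psi))$ and then appeals to Proposition~\ref{Pro:multiplier}(b); these are the same argument.
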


\begin{proof}
	$(a)$ By $(a)$ of Proposition \ref{Pro:multiplier} there exists a $\Z$-periodic function $m_f$ on $\R$ such that $\widehat f = m_f \widehat \psi.$ Hence, $m_f \widehat \psi \in L^2(\R)$ and by Proposition \ref{Pro:2-3} for $N=1$ we deduce,
	\begin{eqnarray*}
	   Z_1(f)(x,\xi) &=& \sum_{k\in \Z} \widehat f (\xi + k) e^{2\pi i x \cdot (\xi + k)} = \sum_{k\in \Z} m_f(\xi + k) \widehat \psi (\xi + k) e^{2\pi i x \cdot (\xi + k)}\\
	   &=& m_f(\xi) \sum_{k\in \Z} \widehat \psi (\xi + k) e^{2\pi i x \cdot (\xi + k)} = m_f(\xi) Z_1 \psi(x,\xi)\,.
	\end{eqnarray*}

	   $(b)$ First notice that since $m\widehat \psi \in L^2 (\R)$, by Proposition \ref{Pro:2-3} for $N=1$,
	   \begin{eqnarray*}
	      m(\xi) Z_1(\psi)(x,\xi) &=& m(\xi)\sum_{k\in \Z} \widehat \psi (\xi + k) e^{2\pi i x \cdot (\xi + k)} = \sum_{k\in \Z} m(\xi+k)\widehat \psi (\xi + k) e^{2\pi i x \cdot (\xi + k)}\\
	      &=& \sum_{k\in \Z} m\widehat \psi (\xi + k) e^{2\pi i x \cdot (\xi + k)} = Z_1(\mathcal F^{-1}(m\widehat \psi))(x,\xi)\,,
	   \end{eqnarray*}
	  where $\mathcal F^{-1}$ is our notation for the inverse Fourier transform of an $L^2(\R)$ function. This shows that $m Z_1(\psi)$ coincides with the Zak transform of $\mathcal F^{-1}(m\widehat \psi)) \in L^2(\R) .$ Since $f$ satisfies $Z_1(f)(x,\xi) = m(\xi) Z_1(\psi)(x,\xi) = Z_1(\mathcal F^{-1}(m\widehat \psi))(x,\xi)$ and $Z_1$ is an isometry, we conclude $\widehat f = m \widehat \psi.$  By $(b)$ of Proposition \ref{Pro:multiplier}, $f\in \langle \psi \rangle_\Z\,.$
\end{proof}

\

\section{Proof of Theorem \ref{Th:1-1}} \label{Proof}

\subsection{Proof of (a) implies (b) of Theorem \ref{Th:1-1}}

Assume that $\langle \psi \rangle_\Z$ is $\frac{1}{N} \Z$ invariant, $N\in \N, N>1.$ Then \ref{Pro:2-5}, $T_{1/N}(\psi) \in \langle \psi \rangle_\Z.$ By Corollary \ref{Cor:2-7}, there exists a $\Z$-periodic function $m$ on $\R$, with $m\widehat \psi \in L^2(\R)$, such that 
$$
Z_1(T_{1/N}(\psi))(x,\xi) = m(\xi) Z_1(\psi)(x,\xi)\,, \ a.\, e. \, x \,, \xi \in \R.
$$
Equivalently,
$$
Z_1(\psi)(x-\frac1N,\xi) = m(\xi) Z_1(\psi)(x,\xi)\,,\ a.\, e. \, x \,, \xi \in \R.
$$
Iterating, for $p=0,1,2,\dots$
\begin{equation} \label{Eq:3-1}
Z_1(\psi)(x-\frac{p}{N},\xi) = m(\xi)^p Z_1(\psi)(x,\xi)\,,\ a.\, e. \, x \,, \xi \in \R.
\end{equation}

On the other hand, by Proposition \ref{Pro:2-1} and equation \eqref{Eq:property2}, for $p\in \Z$ we obtain,
\begin{eqnarray}
    Z_1(\psi)(x - \frac{p}{N},\xi) &=& \sum_{q=0}^{N-1} Z_N(\psi)(x - \frac{p}{N}, \xi +q ) \nonumber \\
    &=&  \sum_{q=0}^{N-1} e^{- 2 \pi i \frac{p(\xi + q)}{N} }Z_N(\psi) (x, \xi+q) \nonumber \\
    &=& e^{- 2 \pi i \frac{p \xi}{N}} \sum_{q=0}^{N-1} e^{- 2 \pi i \frac{pq}{N}}Z_N(\psi) (x, \xi+q)\,. \label{Eq:3-2}
\end{eqnarray}

For $q, p \in \Z$ and $x, \xi \in \R$, let $\alpha_q(x,\xi) := Z_N(\psi)(x, \xi+q)$ and $\displaystyle A_p(x,\xi) := \sum_{q=0}^{N-1} e^{- 2 \pi i \frac{pq}{N}} \alpha_q(x,\xi).$
Observe that for $x,\xi$ fixed, $\alpha_q(x,\xi)$ is $N\Z$-periodic in $q$ (see \ref{Eq:property1}). Also $A_p(x,\xi)$ are the discrete Fourier coefficients of the sequence $\displaystyle \{ \alpha_q(x,\xi)  \}_{q=0}^{N-1}.$ Thus, by inversion,
\begin{equation} \label{Eq:3-3}
   \alpha_q(x,\xi) = \frac1N \sum_{p=0}^{N-1}  e^{ 2 \pi i \frac{pq}{N}} A_p(x,\xi)\,, \ x \,, \xi \in \R\,.
\end{equation}

For these coefficients $A_p(x,\xi)$ the following crucial relation can be proved:

\begin{lemma} \label{Lem:3-1}
	Let $x, y, \xi \in \R.$ If $p, q, p_1. q_1 \in \N$ and $p+q = p_1+q_1 (mod N)$, then $$A_p(x,\xi) A_q(y,\xi) = A_{p_1}(x,\xi) A_{q_1}(y,\xi).$$
\end{lemma}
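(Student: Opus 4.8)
The plan is to distill equations \eqref{Eq:3-1} and \eqref{Eq:3-2} into a single multiplicative relation for the coefficients $A_p$, and then to read the Lemma directly off that relation. First I would set $p=0$ in \eqref{Eq:3-2}, which gives $A_0(x,\xi) = Z_1(\psi)(x,\xi)$. Comparing \eqref{Eq:3-1} with \eqref{Eq:3-2} for a general integer $p\geq 0$ then yields
$$
e^{-2\pi i \frac{p\xi}{N}}\,A_p(x,\xi) = m(\xi)^p\,Z_1(\psi)(x,\xi) = m(\xi)^p\,A_0(x,\xi)\,,
$$
so that, on writing $\mu(\xi) := e^{2\pi i \xi/N}\,m(\xi)$, one obtains the key identity
$$
A_p(x,\xi) = \mu(\xi)^p\,A_0(x,\xi)\,, \qquad p=0,1,2,\dots\,, \ \text{a.e. } x,\xi\in\R\,.
$$

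With this identity in hand, the products factor cleanly:
$$
A_p(x,\xi)\,A_q(y,\xi) = \mu(\xi)^{p+q}\,A_0(x,\xi)\,A_0(y,\xi)\,,
$$
and likewise $A_{p_1}(x,\xi)\,A_{q_1}(y,\xi) = \mu(\xi)^{p_1+q_1}\,A_0(x,\xi)\,A_0(y,\xi)$. Thus the Lemma reduces to establishing $\mu(\xi)^{p+q} = \mu(\xi)^{p_1+q_1}$ wherever the factor $A_0(x,\xi)\,A_0(y,\xi)$ fails to vanish, under the hypothesis $p+q \equiv p_1+q_1 \pmod N$. To control the exponent I would next record that $A_p(x,\xi)$ is $N$-periodic in $p$: since $\alpha_q(x,\xi)$ is $N\Z$-periodic in $q$ by \eqref{Eq:property1}, replacing $p$ by $p+N$ in the definition of $A_p$ leaves it unchanged. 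Feeding this periodicity back into the key identity gives $\mu(\xi)^N A_0(x,\xi) = A_0(x,\xi)$, hence $\mu(\xi)^N = 1$ at every $(x,\xi)$ with $A_0(x,\xi)\neq 0$.

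Finally I would conclude by a case distinction. If either $A_0(x,\xi)=0$ or $A_0(y,\xi)=0$, then the key identity forces $A_p(x,\xi)=0$ for all $p$ (respectively $A_q(y,\xi)=0$ for all $q$), so both sides of the claimed equality vanish and there is nothing to prove. Otherwise $A_0(x,\xi)\,A_0(y,\xi)\neq 0$, so in particular $A_0(x,\xi)\neq 0$ and therefore $\mu(\xi)^N = 1$; consequently $\mu(\xi)^{p+q}$ depends only on $(p+q)\bmod N$, and the congruence $p+q \equiv p_1+q_1 \pmod N$ yields $\mu(\xi)^{p+q} = \mu(\xi)^{p_1+q_1}$, whence the two products agree.

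The only delicate point—and the step I expect to be the genuine obstacle rather than a routine calculation—is that the relation $\mu^N=1$ holds \emph{only} on the support of $A_0$, not identically in $\xi$. This is exactly why the case distinction on the vanishing of $A_0(x,\xi)$ and $A_0(y,\xi)$ is essential: off the support of $A_0$ one cannot invoke $\mu^N=1$, but there the products are zero for the independent reason that every $A_p$ (or every $A_q$) already vanishes.
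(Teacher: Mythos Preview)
Your proof is correct and follows essentially the same route as the paper: combine \eqref{Eq:3-1} and \eqref{Eq:3-2} to write $A_p$ as $m(\xi)^p Z_1(\psi)$ up to a unimodular factor (your packaging $A_p=\mu^p A_0$ is just this), then reduce the exponent modulo $N$. The only cosmetic difference is that the paper handles the mod-$N$ step via the identity $m(\xi)^{kN} Z_1(\psi)(x,\xi)=e^{-2\pi i k\xi}Z_1(\psi)(x,\xi)$ (from the quasi-periodicity \eqref{Eq:property2} of $Z_1$), which lets it avoid your case distinction; note also that the $N$-periodicity of $A_p$ in $p$ you invoke comes simply from $e^{-2\pi i q}=1$ for integer $q$, not from the periodicity of $\alpha_q$.
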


\begin{proof}
	By equation \eqref{Eq:3-2}, 
	$$
	e^{- \frac{2\pi i (p+q)\xi}{N}}A_p(x,\xi) A_q(y,\xi) = Z_1(\psi)(x-\frac{p}{N},\xi) Z_1(\psi)(y-\frac{q}{N},\xi)\,.
	$$
	By equation \eqref{Eq:3-1},
	\begin{equation} \label{Eq:3-4}
		e^{- \frac{2\pi i (p+q)\xi}{N}}A_p(x,\xi) A_q(y,\xi) = m(\xi)^{p+q} Z_1(\psi)(x,\xi) Z_1(\psi)(y,\xi)\,.
	\end{equation}
	Similarly, 
	\begin{equation} \label{Eq:3-5}
	e^{- \frac{2\pi i (p_1+q_1)\xi}{N}}A_{p_1}(x,\xi) A_{q_1}(y,\xi) = m(\xi)^{p_1+q_1} Z_1(\psi)(x,\xi) Z_1(\psi)(y,\xi)\,.
	\end{equation}
	For $k=0,1,2, \dots,$ use \eqref{Eq:3-1} with $p=kN$ and then \eqref{Eq:property2} with $k=\ell$ and $N=1$ to obtain
	\begin{equation} \label{Eq:3-6}
	    m(\xi)^{kN} Z_1(\psi)(x,\xi) = Z_1(\psi)(x-k,\xi) = e^{-2\pi i k \xi} Z_1(\psi)(x,\xi)\,.
	\end{equation}
	
	Assume $p_1 + q_1 = p + q+ kN$ for some $k=0, 1, 2, \dots.$ Then, by \eqref{Eq:3-5}, \eqref{Eq:3-6} and \eqref{Eq:3-4},
	\begin{eqnarray*}
	     A_{p_1}(x,\xi) A_{q_1}(y,\xi) &=& e^{ \frac{2\pi i (p_1+q_1)\xi}{N}} m(\xi)^{p_1+q_1} Z_1(\psi)(x,\xi) Z_1(\psi)(y,\xi) \\
	     &=& e^{ \frac{2\pi i (p+q)\xi}{N}} e^{2\pi i k \xi} m(\xi)^{p+q} m(\xi)^{kN }Z_1(\psi)(x,\xi) Z_1(\psi)(y,\xi) \\
	     &=& e^{ \frac{2\pi i (p+q)\xi}{N}} m(\xi)^{p+q} Z_1(\psi)(x,\xi) Z_1(\psi)(y,\xi) \\
	     &=&  A_{p}(x,\xi) A_{q}(y,\xi)\,.
	\end{eqnarray*}
\end{proof}

We continue now with the proof. With the notation introduced above, we need to show that $\alpha_p(x,\xi) \,\alpha_q(y,\xi) = 0 \,\  a.\, e.\, x,y\in [0,1/N), \, a. \, e. \, \xi \in [0,1),$ for all $p, q = 0, 1, \dots N-1, p \neq q\,.$ 
By equation \eqref{Eq:3-3} 
\begin{eqnarray*}
    \alpha_p(x,\xi) \,\alpha_q(y,\xi) &=& \frac{1}{N^2} \left(\sum_{j=0}^{N-1}  e^{ 2 \pi i \frac{jp}{N}} A_j(x,\xi) \right) \left(\sum_{\ell=0}^{N-1}  e^{ 2 \pi i \frac{\ell q }{N}} A_\ell(y,\xi) \right)\\
    &=& \frac{1}{N^2} \sum_{j=0}^{N-1} \sum_{\ell=0}^{N-1} e^{ 2 \pi i \frac{(jp + \ell q)}{N}} A_j(x,\xi) A_\ell(y,\xi)\,.
\end{eqnarray*}
Let $\ell = N - 1 -j - k.$ Then,
$$
\alpha_p(x,\xi) \,\alpha_q(y,\xi) = \frac{1}{N^2}\sum_{j=0}^{N-1} \sum_{k=-j}^{N-1-j}  e^{\frac{2\pi i j (p-q)}{N}} e^{-\frac{2\pi i (k+1)q}{N}} A_j(x,\xi) A_{N-1-j-k}(y,\xi)\,.
$$
By Lemma \ref{Lem:3-1}, $ A_j(x,\xi) A_{N-1-j-k}(y,\xi) = A_0(x,\xi) A_{N-1-k}(y,\xi).$ Thus, 
$$
\alpha_p(x,\xi) \,\alpha_q(y,\xi) = \frac{1}{N^2} \sum_{j=0}^{N-1} \sum_{k=-j}^{N-1-j}  e^{\frac{2\pi i j (p-q)}{N}} e^{-\frac{2\pi i (k+1)q}{N}} A_0(x,\xi) A_{N-1-k}(y,\xi)\,.
$$
Interchanging, carefully, the above summations, and using that $A_0(x,\xi) A_{2N-1-\ell}(y,\xi) = A_{0}(x,\xi) A_{N-1-\ell}(y,\xi)$ by Lemma \ref{Lem:3-1},  we obtain,
\begin{eqnarray*}
	\alpha_p(x,\xi) \,\alpha_q(y,\xi) &=& \frac{1}{N^2} \sum_{k=0}^{N-1} \sum_{j=0}^{N-1-k}  e^{\frac{2\pi i j (p-q)}{N}} e^{-\frac{2\pi i (k+1)q}{N}} A_0(x,\xi)  A_{N-1-k}(y,\xi) \\
	& & \qquad + \frac{1}{N^2} \sum_{k=-N+1}^{-1} \sum_{j=-k}^{N-1}  e^{\frac{2\pi i j (p-q)}{N}} e^{-\frac{2\pi i (k+1)q}{N}} A_0(x,\xi)  A_{N-1-k}(y,\xi) \\
	&=&  \frac{1}{N^2} \sum_{\ell=0}^{N-1} \sum_{j=0}^{N-1-\ell}  e^{\frac{2\pi i j (p-q)}{N}} e^{-\frac{2\pi i (\ell+1)q}{N}} A_0(x,\xi)  A_{N-1-\ell}(y,\xi) \\
	& & \qquad + \frac{1}{N^2} \sum_{\ell=1}^{N-1} \sum_{j=N-\ell}^{N-1}  e^{\frac{2\pi i j (p-q)}{N}} e^{-\frac{2\pi i (\ell+1)q}{N}} A_0(x,\xi)  A_{2N-1-\ell}(y,\xi) \\
	&=& \frac{1}{N^2} \sum_{\ell=0}^{N-1} \left( \sum_{j=0}^{N-1} e^{\frac{2\pi i j (p-q)}{N}} \right) e^{-\frac{2\pi i (\ell+1)q}{N}} A_0(x,\xi)  A_{N-1-\ell}(y,\xi)\,.
\end{eqnarray*}
Since, when $p\neq q$, $\displaystyle \sum_{j=0}^{N-1} e^{\frac{2\pi i j (p-q)}{N}} = 0$ the result is established.

\subsection{Proof of (b) implies (a) of Theorem \ref{Th:1-1}}

Suppose that 
\begin{equation} \label{Eq:3-7}
     Z_N(\psi)(x, \xi + p)\, Z_N(\psi)(y, \xi+q) = 0
\end{equation}
a.e. $x,y \in [0\,, 1/N)$, a.e. $\xi \in [0, 1)$, for all
$p, q = 0, 1, \dots N-1, p \neq q\,.$ By \eqref{Eq:property2}, equation \eqref{Eq:3-7} holds for a. e. $x,y \in \R.$ By Proposition \ref{Pro:2-5} and Corollary \ref{Cor:2-7} it is enough to find a $\Z$-periodic function $m$ defined on $\R$ such that $m\widehat \psi \in L^2(\R)$ and
\begin{equation} \label{Eq:3-8}
    Z_1(T_{1/N}(\psi))(x,\xi) = m(\xi) Z_1(\psi)(x,\xi)
\end{equation}
a. e. $x, \xi \in \R.$ By the quasi-periodicity properties of $Z_1$ (see \eqref{Eq:property1} and \eqref{Eq:property2}) it is enough to prove \eqref{Eq:3-8} for a. e. $x, \xi \in [0,1).$

For $0 \leq q \leq N-1$ and $ 0\leq x < 1$, let 
$$
   S_\psi^{(q)}(x) := \{ \xi \in [0,1): Z_N(\psi)(x, \xi+q) \neq 0  \}\,,
$$
and
$$
   S_\psi^{(q)} := \bigcup_{x\in [0,1)} S_\psi^{(q)}(x)\,.
$$
Note that $S_\psi^{(q)}$ is a measurable subset of $[0,1)\times [0,1)$. From \eqref{Eq:3-7} we conclude  $ |S_\psi^{(q)} \cap S_\psi^{(p)}| = 0$ when $p,q = 0, 1, 2, \dots, N-1, p \neq q.$ Finally, define 
$$
   S_\psi = [0,1) \setminus \bigcup_{q=0}^{N-1} S_\psi^{(q)}\,.
$$
For $0 \leq \xi < 1$, define
\[
m(\xi)= \begin{cases}
e^{\frac{-2\pi i(\xi+q)}{N}}\, &\mbox{ if }\xi\in S^{(q)}, 0\le q\le N-1\\
1\, &\mbox{ if } \xi \in S_\psi
\end{cases},
\]
and extend $m$ to $\R$ to be $\Z$-periodic. Since $|m(\xi)| = 1$ and $\psi \in L^2(\R)$, we conclude $m\widehat \psi \in L^2(\R).$

We need to show that \eqref{Eq:3-8} holds for a. e. $x, \xi \in [0,1).$ For almost every $x, \xi \in [0,1)$ either $Z_N(\psi)(x,\xi+q)=0$ for all $q=0,1,2, \dots, N-1$ or there exists only one value of $q \in \{0,1,2, \dots, N-1\}$ such that $Z_N(\psi)(x,\xi+q)\neq 0.$ In the first case, by Propositions \ref{Pro:2-1} and \ref{Pro:2-2} we have
$$
Z_1(\psi)(x,\xi) = 0 \qquad \mbox{and} \qquad Z_1(T_{1/N})(x, \xi)= 0\,,
$$
so that \eqref{Eq:3-8} holds trivially. In the second case, again by  Propositions \ref{Pro:2-1} and \ref{Pro:2-2} we have
$$
Z_1(\psi)(x,\xi) = Z_N(\psi)(x, \xi+q) \qquad \mbox{and} \qquad Z_1(T_{1/N})(x, \xi)= e^{- \frac{2 \pi i (\xi + q)}{N}}Z_N(\psi)(x, \xi+q) \,.
$$
Since, in this case, $\xi \in S_\psi^{(q)}(x),$ we have $m(\xi) = e^{- \frac{2 \pi i (\xi + q)}{N}}$ and the equality \eqref{Eq:3-8} also holds in this case.

\section{Tools and results for LCA groups} \label{LCA}

A natural question is to ask if Theorem \ref{Th:1-1} can be extended to locally compact abelian (LCA) groups. 
In \cite{ACP10} the authors characterize the extra invariance of shift invariant spaces on LCA groups in terms of cut--off 
spaces in the Fourier transform side, and also in terms of range functions. Here, we give a characterization using the Zak transform relative to a given lattice.

We start by describing the results we need for our extension. For a detailed introduction to LCA groups see \cite{Rud92}.

\subsection{Background on LCA groups}   \label{Background}

A group $(G, +)$ is an LCA (locally compact abelian) group if it is endowed with a separable, locally compact, Hausdorff topology, the map $x \longrightarrow -x$ is continuous from $G$ into $G$, and the map $(x,y) \longrightarrow x+y$ is continuous from $G\times G$ into $G$. Every LCA group $G$ has a non-zero Borel measure which is translation invariant and unique, up to a possible scalar multiple, called Haar measure, and denoted by $\mu_G$.

\

A character of an LCA group $G$ is a continuous homomorphism $\alpha: G \longrightarrow S^1 = \{z\in \mathbb C : |z|=1  \}.$ The set of all characters of $G$, with the compact open topology, is an LCA group, denoted by $\widehat G$, the {\bf dual group} of $G.$ We write $(x,\alpha) = \alpha (x)$ when $x\in G$ and $\alpha \in \widehat G.$ Notice that for $x,y\in G$ and $\alpha \in \widehat G$, $(x+y,\alpha) = (x,\alpha) (y, \alpha)$ since $\alpha$ is a homomorphism. Thus, $(0,\alpha)=1$, for any $\alpha \in \widehat G$. Similarly, for $x\in G$ and $\alpha, \beta \in \widehat G$, $(x, \alpha + \beta) = (x,\alpha)(x,\beta)$ and $(x,0)=1.$

\

A subgroup $\mathcal L$ fo $G$ is called a {\bf lattice} it is discrete with respect to the topology of $G$ and $T_{\mathcal L} = G/{\mathcal L}$ is compact in the quotient topology. In particular $\mathcal L$ is countable. Associated to a lattice $\mathcal L$ of $G$ there is a {\bf dual lattice}
given by
$$
\mathcal L^\perp = \{ \alpha \in \widehat G : (\ell, \alpha)=0 \ \mbox{for all} \  \ell \in \mathcal L  \}.
$$
It is well known (see \cite{Rud92}, Theorem 2.1.2) that
\begin{equation} \label{Eq:4-1}
\widehat {\left( G/\mathcal L \right)} \approx \mathcal L^\perp \qquad \mbox{and} \qquad \widehat G / \widehat {\mathcal L} \approx \mathcal L^\perp\,.
\end{equation}

Given two lattices $\mathcal K \subset \mathcal L$ of $G$, the quotient group $\mathcal L /{\mathcal K} \approx (G/\mathcal L)/ (G /\mathcal K) = T_{\mathcal L}/T_{\mathcal K},$ is a finite abelian group since $T_{\mathcal L}$ and $T_{\mathcal K}$ are compact. 


\

We have $\mathcal L^\perp \subset \mathcal K^\perp,$ and therefore $\mathcal K^\perp /{\mathcal L^\perp} $ is also a finite abelian group. In fact, $\mathcal K^\perp /{\mathcal L^\perp}$ and $\mathcal L /{\mathcal K}$ have the same number of elements. To see this, use \eqref{Eq:4-1} with $G=\mathcal L$ and $\mathcal L = \mathcal K$ to deduce $\widehat {\left( \mathcal L/\mathcal K \right)} \approx \widehat{\mathcal L} / {\widehat{\mathcal K}}\,.$ Again by \eqref{Eq:4-1},
$$
\widehat {\left( \mathcal L/\mathcal K \right)} \approx \widehat{\mathcal L} / {\widehat{\mathcal K}} \approx \left(\widehat{G} / {\widehat{\mathcal K}}\right) / \left( \widehat{G} / {\widehat{\mathcal L}}\right) \approx \mathcal K^\perp / \mathcal L^\perp\,.
$$
Since $\mathcal L / {\mathcal K}$ is a finite abelian group, $\widehat {\left( \mathcal L/\mathcal K \right)} \approx \mathcal L / {\mathcal K}$ and the result follows. 


\

If $[\ell]\in \mathcal L/\mathcal K$ and $[\alpha]\in \mathcal K^\perp /\mathcal L^\perp$, the number $ ([\ell ],[\alpha]) := (\ell, \alpha)$ is well defined. Since $\mathcal L/\mathcal K$ and $\mathcal K^\perp /\mathcal L^\perp$ are finite abelian groups, by Theorem 1.2.5 in \cite{Rud92}, 
\begin{equation} \label{Eq:4-2}
\sum_{[\ell]\in \mathcal L/\mathcal K} ([\ell ],[\alpha]) = \begin{cases}
|\mathcal L/\mathcal K| \, &\mbox{ if } \, [\alpha] = [0]\in \mathcal K^\perp /\mathcal L^\perp \\
0\, &\mbox{ if } \, [\alpha] \neq [0]\in \mathcal K^\perp /\mathcal L^\perp 
\end{cases},
\end{equation}

By duality we also have,
\begin{equation} \label{Eq:4-3}
\sum_{[\alpha]\in \mathcal K^\perp/\mathcal L^\perp} ([\ell ],[\alpha]) = \begin{cases}
|\mathcal K^\perp /\mathcal L^\perp|=|\mathcal L/\mathcal K| \, &\mbox{ if } \, [\ell] = [0]\in \mathcal L /\mathcal K \\
0\, &\mbox{ if } \, [\ell] \neq [0]\in \mathcal L /\mathcal K 
\end{cases},
\end{equation}

The {\bf Fourier transform} of $f\in L^1(G, \mu_G)$ is defined by
$$
\widehat f (\alpha) = \int_G f(x)(-x,\alpha)d\mu_G(x)\,, \quad \alpha \in \widehat G\,,
$$
and extends to an unique isometry $\mathcal F(f)= \widehat f$ from $L^2(G, \mu_G)$ into $L^2(\widehat G, \mu_{\widehat G})$, where $\mu_{\widehat G}$ is the Plancherel measure in $\widehat G.$

\

In the sequel we will use the {\bf Poisson Summation Formula} in this situation (see Theorem 5.5.2 in \cite{Rei68}). Let $\mathcal L$ be a lattice in an LCA group $G$ and $F\in C_c(G)$ (the set of continuous functions with compact support on $G$), then
\begin{equation} \label{Eq:4-4}
|T_{\mathcal L}| \sum_{\ell\in \mathcal L} F(\ell) = \sum_{\gamma\in \mathcal L^\perp} \widehat F(\gamma)\,.
\end{equation}

\

\subsection{The Zak transform on LCA groups}  \label{ZakLCA}

Let $\mathcal L$ be a lattice in an LCA group. For $f \in L^1(G)$ the {\bf Zak transform}  of $f$ with respect to the lattice $\mathcal L$ is given by
\begin{equation} \label{Def:4-5}
Z_{\mathcal L} (f)(\alpha ,x) =  |T_{\mathcal L}| \sum_{\ell \in \mathcal L} f(x+\ell) (-\ell, \alpha)\,, \quad \alpha \in \widehat G, \ x\in G¦,.
\end{equation}
It can be extended to an isometric isomorphism from $L^2(G)$ onto $L^2(\widehat{\mathcal L}, L^2(C_{\mathcal L}))$, where $C_{\mathcal L}$ is a measurable set of representatives of $G/\mathcal L$. (For a proof see Proposition 3.3 in \cite{BHP15}.)

\

We list now some properties of the Zak transform just defined. The first one is the following: if $[\alpha_1] = [\alpha_2]$ in $\widehat G / \mathcal L^\perp$, then
\begin{equation} \label{Eq:4-6}
Z_{\mathcal L}(f)(\alpha_1,x) = Z_{\mathcal L}(f)(\alpha_2,x)\,, \quad x\in G\,.
\end{equation}
Indeed, since $[\alpha_1] = [\alpha_2]$ in $\widehat G / \mathcal L^\perp$, there exists $\gamma \in \mathcal L^\perp$ such that $\alpha_1 - \alpha_2 = \gamma.$ Then
\begin{eqnarray*}
	 Z_{\mathcal L}(f)(\alpha_1,x) &=& |T_{\mathcal L}| \sum_{\ell \in \mathcal L} f(x+\ell) (-\ell, \alpha_1)\\
	&=& |T_{\mathcal L}| \sum_{\ell \in \mathcal L} f(x+\ell) (-\ell, \alpha_2+\gamma)\\
	&=& |T_{\mathcal L}| \left(\sum_{\ell \in \mathcal L} f(x+\ell) (-\ell, \alpha_2)\right) (-\ell , \gamma)\\
	&=& Z_{\mathcal L}(f)(\alpha_2,x)\,,
\end{eqnarray*}
since $(-\ell , \gamma)=1$ by definition of $\mathcal L^\perp.$ The second one is related to translations in $G$: if $\ell \in \mathcal L$, then
\begin{equation} \label{Eq:4-7}
Z_{\mathcal L}(f)(\alpha,x-\ell) = (-\ell, \alpha) Z_{\mathcal L}(f)(\alpha,x)\,, \quad x\in G,\ \alpha \in \widehat G.
\end{equation}
In fact, 
\begin{eqnarray*}
	Z_{\mathcal L}(f)(\alpha,x-\ell) &=& |T_{\mathcal L}| \sum_{\ell' \in \mathcal L} f(x-\ell+\ell') (-\ell', \alpha)\\
	&=& |T_{\mathcal L}| \sum_{\ell'' \in \mathcal L} f(x+\ell'') (-\ell'' - \ell, \alpha)\\
	&=& |T_{\mathcal L}| \left(\sum_{\ell'' \in \mathcal L} f(x+\ell'') (-\ell'', \alpha)\right) (-\ell , \alpha)\\
	&=& (-\ell, \alpha) Z_{\mathcal L}(f)(\alpha,x)\,.
\end{eqnarray*}
\begin{Remark}
	It follows from \eqref{Eq:4-7} that if $[x_1]=[x_2]$ in $G/\mathcal L$ and $\alpha \in \mathcal L^\perp$, then $Z_{\mathcal L}(f)(\alpha,x_1) = Z_{\mathcal L}(f)(\alpha,x_2).$
\end{Remark}

\begin{proposition} \label{Pro:4-1}
	Let $\mathcal K \subset \mathcal L$ be two lattices in an LCA group $G$.  For $f\in L^2(G), \ \alpha \in \widehat G,\ x\in G,$
	$$
	Z_{\mathcal K}(f)(\alpha,x) = \sum_{[\beta]\in \mathcal K^\perp / \mathcal L^\perp } Z_{\mathcal L}(f)(\alpha + \beta,x)\,.
	$$
\end{proposition}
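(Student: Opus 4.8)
The plan is to mimic the proof of Proposition \ref{Pro:2-1}, replacing the cyclic group $\Z/N\Z$ and its character orthogonality by the finite abelian group $\mathcal K^\perp/\mathcal L^\perp$ together with the orthogonality relation \eqref{Eq:4-3}. First I would reduce to $f\in C_c(G)$ by density, so that the defining series \eqref{Def:4-5} of both $Z_{\mathcal K}(f)$ and $Z_{\mathcal L}(f)$ converge absolutely and the interchange of summations below is legitimate. Then I would insert the definition of $Z_{\mathcal L}$ into the right-hand side and split the character via $(-\ell,\alpha+\beta)=(-\ell,\alpha)(-\ell,\beta)$, obtaining
\begin{equation*}
\sum_{[\beta]\in\mathcal K^\perp/\mathcal L^\perp} Z_{\mathcal L}(f)(\alpha+\beta,x) = |T_{\mathcal L}|\sum_{\ell\in\mathcal L} f(x+\ell)(-\ell,\alpha)\sum_{[\beta]\in\mathcal K^\perp/\mathcal L^\perp}(-\ell,\beta).
\end{equation*}
Since $\mathcal K^\perp/\mathcal L^\perp$ is finite, interchanging the two sums is immediate. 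Before doing so I would check that $(-\ell,\beta)$ depends only on the class $[\beta]$: this holds because $\ell\in\mathcal L$ and $(-\ell,\gamma)=1$ for every $\gamma\in\mathcal L^\perp$, so the inner sum is well defined.

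The crux is the evaluation of this inner sum. For $\ell\in\mathcal L$ and $\beta\in\mathcal K^\perp$ the pairing $([-\ell],[\beta])=(-\ell,\beta)$ between $\mathcal L/\mathcal K$ and $\mathcal K^\perp/\mathcal L^\perp$ is well defined, so the inner sum equals $\sum_{[\beta]\in\mathcal K^\perp/\mathcal L^\perp}([-\ell],[\beta])$. By the orthogonality relation \eqref{Eq:4-3} this is $|\mathcal L/\mathcal K|$ when $[-\ell]=[0]$ in $\mathcal L/\mathcal K$, i.e. when $\ell\in\mathcal K$, and vanishes otherwise. Hence only the terms with $\ell\in\mathcal K$ survive and
\begin{equation*}
\sum_{[\beta]\in\mathcal K^\perp/\mathcal L^\perp} Z_{\mathcal L}(f)(\alpha+\beta,x) = |T_{\mathcal L}|\,|\mathcal L/\mathcal K|\sum_{k\in\mathcal K} f(x+k)(-k,\alpha).
\end{equation*}

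Finally I would identify $|T_{\mathcal L}|\,|\mathcal L/\mathcal K|=|T_{\mathcal K}|$: since $\mathcal K\subset\mathcal L$ has index $|\mathcal L/\mathcal K|$, a measurable set of representatives $C_{\mathcal K}$ of $G/\mathcal K$ decomposes into $|\mathcal L/\mathcal K|$ translates of $C_{\mathcal L}$ by coset representatives of $\mathcal L/\mathcal K$, so that $|T_{\mathcal K}|=\mu_G(C_{\mathcal K})=|\mathcal L/\mathcal K|\,\mu_G(C_{\mathcal L})=|\mathcal L/\mathcal K|\,|T_{\mathcal L}|$. Substituting turns the right-hand side into $|T_{\mathcal K}|\sum_{k\in\mathcal K} f(x+k)(-k,\alpha)=Z_{\mathcal K}(f)(\alpha,x)$ by \eqref{Def:4-5}, which is the claim. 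I expect the only real obstacle to be bookkeeping: verifying the well-definedness of the coset sum and correctly pinning down the normalization $|T_{\mathcal L}|\,|\mathcal L/\mathcal K|=|T_{\mathcal K}|$ so the prefactors match. The analytic content is routine, since after the density reduction everything converges absolutely and the $[\beta]$-sum is finite.
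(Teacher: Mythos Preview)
Your proposal is correct and follows essentially the same route as the paper: reduce to $f\in C_c(G)$ by density, expand the right-hand side via the definition of $Z_{\mathcal L}$, interchange sums, apply the orthogonality relation \eqref{Eq:4-3} to kill all $\ell\notin\mathcal K$, and use $|T_{\mathcal L}|\,|\mathcal L/\mathcal K|=|T_{\mathcal K}|$ to recover $Z_{\mathcal K}(f)$. Your write-up is in fact slightly more careful than the paper's in justifying the normalization identity and the well-definedness of the coset sum.
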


\begin{proof}
	Observe that for $[\beta]\in \mathcal K^\perp / \mathcal L^\perp$, $Z_{\mathcal L}(f)(\alpha + \beta, x)$ is well defined by \eqref{Eq:4-6}, that is the formula is independent of the representative chosen in $[\beta].$ 
	By density, it is enough to prove the result for $f\in C_c(G).$ Using definition \eqref{Def:4-5}, 
	\begin{eqnarray*}
		\sum_{[\beta]\in \mathcal K^\perp / \mathcal L^\perp } Z_{\mathcal L}(f)(\alpha + \beta,x) &=& \sum_{[\beta]\in \mathcal K^\perp / \mathcal L^\perp } |T_{\mathcal L}| \sum_{\ell \in \mathcal L} f(x+\ell) (-\ell, \alpha + \beta)\\
		&=& \sum_{\ell \in \mathcal L}  |T_{\mathcal L}| \left(\sum_{[\beta]\in \mathcal K^\perp / \mathcal L^\perp } (-\ell, \beta)\right) f(x+\ell) (-\ell, \alpha)\,.
	\end{eqnarray*}
    By \eqref{Eq:4-3}, $\displaystyle \sum_{[\beta]\in \mathcal K^\perp / \mathcal L^\perp } (-\ell, \beta) = |\mathcal L/ \mathcal K|$ if $[\ell]=[0]$ in $\mathcal L/ \mathcal K$ and equals $0$ ir  $[\ell]\neq [0]$ in $\mathcal L/ \mathcal K$. Since $|\mathcal L/ \mathcal K| = |T_{\mathcal K}|/|T_{\mathcal L}|$ we obtain
    $$
    \sum_{[\beta]\in \mathcal K^\perp / \mathcal L^\perp } Z_{\mathcal L}(f)(\alpha + \beta,x) = \sum_{k \in \mathcal K} |T_{\mathcal K}| f(x+k) (-k, \alpha) = Z_{\mathcal K}(f)(\alpha, x)\,.
    $$
\end{proof}

Recall that $T_x(f)(y) = f(y-x)$ denotes the translation by $x\in G$ of the function $f$ defined in $G$.
\begin{proposition} \label{Pro:4-2}
	Let $\mathcal K \subset \mathcal L$ be two lattices in an LCA group $G$.  For $\ell \in \mathcal L, f\in L^2(G), \ \alpha \in \widehat G,\ x\in G,$
	$$
	Z_{\mathcal K}(T_\ell f)(\alpha,x) = \sum_{[\beta]\in \mathcal K^\perp / \mathcal L^\perp } (-\ell, \alpha + \beta) Z_{\mathcal L}(f)(\alpha + \beta,x).
	$$
\end{proposition}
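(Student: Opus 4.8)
The plan is to mirror the proof of Proposition \ref{Pro:2-2}, which is the special case $G=\R$, $\mathcal{K}=\Z$, $\mathcal{L}=\frac1N\Z$, $\ell=\frac1N$: there one combined Proposition \ref{Pro:2-1} with the quasi-periodicity property \eqref{Eq:property2}. Here I would combine Proposition \ref{Pro:4-1} (the LCA analogue of Proposition \ref{Pro:2-1}) with the translation property \eqref{Eq:4-7} (the analogue of \eqref{Eq:property2}). As in the earlier propositions, it suffices to argue for $f\in C_c(G)$ and then extend by density, since $Z_{\mathcal K}$ and $Z_{\mathcal L}$ are isometries; this also sidesteps the fact that $T_\ell f$ need not lie in $L^1(G)$.

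First I would apply Proposition \ref{Pro:4-1} to the function $T_\ell f$ in place of $f$, which gives
$$
Z_{\mathcal K}(T_\ell f)(\alpha,x) = \sum_{[\beta]\in \mathcal K^\perp / \mathcal L^\perp } Z_{\mathcal L}(T_\ell f)(\alpha + \beta,x).
$$
Next I would record the elementary identity $Z_{\mathcal L}(T_\ell f)(\gamma,x) = Z_{\mathcal L}(f)(\gamma, x-\ell)$, which follows directly from the definition \eqref{Def:4-5}: writing $T_\ell f(x+\ell') = f(x+\ell'-\ell)$ and reindexing the sum over $\mathcal L$ by $\ell'' = \ell' - \ell$ (legitimate precisely because $\ell\in\mathcal L$, so the shifted index still ranges over all of $\mathcal L$) reproduces $Z_{\mathcal L}(f)(\gamma, x-\ell)$. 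Applying this with $\gamma = \alpha+\beta$ turns the sum above into $\sum_{[\beta]} Z_{\mathcal L}(f)(\alpha+\beta, x-\ell)$.

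Finally, since $\ell\in\mathcal L$, property \eqref{Eq:4-7} applies with the character $\alpha+\beta$ and yields
$$
Z_{\mathcal L}(f)(\alpha+\beta, x-\ell) = (-\ell, \alpha+\beta)\, Z_{\mathcal L}(f)(\alpha+\beta, x).
$$
Substituting this into the previous display gives exactly the claimed formula. I do not anticipate a genuine obstacle: the argument is purely formal once the density reduction is in place. The two points that require a moment of care are that \eqref{Eq:4-7} is valid only for translation parameters in $\mathcal L$ (satisfied here by hypothesis, which is why the proposition is stated for $\ell\in\mathcal L$ rather than an arbitrary element of $G$), and that the term $Z_{\mathcal L}(f)(\alpha+\beta,x)$ is well defined on the quotient $\mathcal K^\perp/\mathcal L^\perp$ by \eqref{Eq:4-6}, exactly as noted at the start of the proof of Proposition \ref{Pro:4-1}.
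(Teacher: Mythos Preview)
Your proposal is correct and follows essentially the same approach as the paper: density reduction to $C_c(G)$, then Proposition \ref{Pro:4-1} combined with the translation identity and property \eqref{Eq:4-7}. The only cosmetic difference is the order of operations: the paper first writes $Z_{\mathcal K}(T_\ell f)(\alpha,x)=Z_{\mathcal K}(f)(\alpha,x-\ell)$ and then applies Proposition \ref{Pro:4-1} at the point $x-\ell$, whereas you apply Proposition \ref{Pro:4-1} to $T_\ell f$ first and then invoke the translation identity at the $\mathcal L$ level; the two routes are interchangeable.
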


\begin{proof}
	By density, it is enough to prove the result for $f\in C_c(G).$ Use Proposition \ref{Pro:4-1} and \eqref{Eq:4-7} to obtain
	\begin{eqnarray*}
		 Z_{\mathcal K}(T_\ell f)(\alpha,x) &=& Z_{\mathcal K}( f)(\alpha,x-\ell) \\
		&=& \sum_{[\beta]\in \mathcal K^\perp / \mathcal L^\perp }  Z_{\mathcal L}(f)(\alpha + \beta,x-\ell) \\
		&=& \sum_{[\beta]\in \mathcal K^\perp / \mathcal L^\perp } (-\ell, \alpha + \beta) Z_{\mathcal L}(f)(\alpha + \beta,x).
	\end{eqnarray*}
\end{proof}

As in te case of $G=\R$ we are going to need an expression for the Zak transform of $f\in L^2(G)$ in terms of the Fourier transform of $f$ in $G$. This is possible due to the Poisson Summation Formula \eqref{Eq:4-4}.
\begin{proposition} \label{Pro:4-3}
	For $f\in L^2(G), \ x\in G, \ \alpha \in \widehat G$, and $\mathcal L$ a lattice in $G$, 
	$$
	Z_{\mathcal L}(f)(\alpha,x) = \sum_{\gamma \in \mathcal L^\perp} \widehat f(\alpha + \gamma)(x, \alpha + \gamma).
	$$
\end{proposition}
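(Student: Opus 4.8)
The plan is to mimic the proof of Proposition~\ref{Pro:2-3} for $G=\R$, replacing the classical Poisson Summation Formula by its LCA counterpart \eqref{Eq:4-4}. First I would reduce to $f\in C_c(G)$ by density: the left-hand side is $Z_{\mathcal L}(f)$, which depends continuously on $f$ since $Z_{\mathcal L}$ is an isometry from $L^2(G)$ onto $L^2(\widehat{\mathcal L},L^2(C_{\mathcal L}))$, so it suffices to verify the identity on the dense subspace $C_c(G)$ and then pass to the $L^2$-limit, exactly as in the proof of Proposition~\ref{Pro:2-3}.

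For fixed $x\in G$ and $\alpha\in\widehat G$, I would introduce the auxiliary function
$$
F_{x,\alpha}(y) := f(x+y)\,(-y,\alpha)\,, \quad y\in G\,,
$$
which lies in $C_c(G)$ because $f$ does and $|(-y,\alpha)|=1$. By the definition \eqref{Def:4-5} one has $Z_{\mathcal L}(f)(\alpha,x) = |T_{\mathcal L}|\sum_{\ell\in\mathcal L}F_{x,\alpha}(\ell)$, so applying the Poisson Summation Formula \eqref{Eq:4-4} to $F_{x,\alpha}$ and the lattice $\mathcal L$ gives
$$
Z_{\mathcal L}(f)(\alpha,x) = \sum_{\gamma\in\mathcal L^\perp}\widehat{F_{x,\alpha}}(\gamma)\,.
$$
Here the factor $|T_{\mathcal L}|$ matches exactly, since it appears on both sides of \eqref{Eq:4-4} and in \eqref{Def:4-5}.

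The remaining step is to evaluate $\widehat{F_{x,\alpha}}(\gamma)$. Using the homomorphism identity $(-y,\alpha)(-y,\gamma)=(-y,\alpha+\gamma)$, the translation-invariance of $\mu_G$ under the change of variables $z=x+y$, and $(-(z-x),\alpha+\gamma)=(x,\alpha+\gamma)(-z,\alpha+\gamma)$, I would obtain
$$
\widehat{F_{x,\alpha}}(\gamma) = \int_G f(x+y)(-y,\alpha+\gamma)\,d\mu_G(y) = (x,\alpha+\gamma)\,\widehat f(\alpha+\gamma)\,,
$$
and substituting this back into the preceding sum yields the asserted formula. There is no genuine analytic obstacle: the content is carried entirely by \eqref{Eq:4-4}, and the only points needing care are the purely formal character identities used to split $(-(z-x),\alpha+\gamma)$ and the invariance of the Haar measure. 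One should also note, as for \eqref{Eq:4-6}, that the full sum over $\gamma\in\mathcal L^\perp$ depends on $\alpha$ only through its class in $\widehat G/\mathcal L^\perp$, so the right-hand side is consistent with the quasi-periodicity of $Z_{\mathcal L}(f)$ established in \eqref{Eq:4-6}.
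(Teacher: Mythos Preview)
Your proposal is correct and follows essentially the same approach as the paper: reduce to $f\in C_c(G)$, introduce the auxiliary function $F_{x,\alpha}(y)=f(x+y)(-y,\alpha)$, apply the Poisson Summation Formula \eqref{Eq:4-4}, and compute $\widehat{F_{x,\alpha}}(\gamma)=(x,\alpha+\gamma)\widehat f(\alpha+\gamma)$ via the change of variables $z=x+y$ and the character identities. The only difference is cosmetic (you write $F_{x,\alpha}$ where the paper writes $F_{\alpha,x}$), and your added remarks on the density justification and the $\mathcal L^\perp$-periodicity in $\alpha$ are welcome clarifications.
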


\begin{proof}
	As before, it is enough to prove the result for $f\in C_c(G).$ Consider the function $F_{\alpha , x} (y) = f(x+y)(-y,\alpha)\,, \ y\in G.$ By the Poisson Summation Formula,
	\begin{equation} \label{Eq:4-8}
	    Z_{\mathcal L} (\alpha, x) =|T_{\mathcal L}| \sum_{\ell \in \mathcal L} F_{\alpha,x}(\ell) = \sum_{\gamma \in \mathcal L^\perp} \widehat{F_{\alpha,x}} (\gamma)\,.
	\end{equation}
	We now compute $\widehat{F_{\alpha,x}} (\gamma):$
	\begin{eqnarray}
		\widehat{F_{\alpha,x}} (\gamma) &=& \int_G F_{\alpha,x} (y) (-y,\gamma)\,d \mu_G (y) \nonumber \\
		&=& \int_G f(x+y) (-y,\alpha) (-y,\gamma)\,d \mu_G (y) \nonumber \\
		&=& \int_G f(x+y) (-y,\alpha + \gamma) \,d \mu_G (y)  \nonumber\\
		&=& \int_G f(z) (x-z,\alpha + \gamma) \,d \mu_G (z) \nonumber\\
		&=& (x, \alpha + \gamma)\ \widehat f (\alpha + \gamma)\,.  \label{Eq:4-9}
	\end{eqnarray}
	The result now follows replacing \eqref{Eq:4-9} in \eqref{Eq:4-8}.
\end{proof}

\

\subsection{Principal invariant spaces in LCA groups} \label{PrincipalLCA}

Let $\mathcal L$ be a lattice in an LCA group $G$. A closed subspace $V$ of $L^2(G)$ is $\mathcal L$ {\bf invariant} if when $f\in V$, $T_\ell (f) \in V$ for all $\ell \in \mathcal L.$ If $\psi \in L^2(G)$, the subspace
$$
\langle \psi \rangle_{\mathcal L} := \overline{span} \{ T_\ell(\psi) : \ell \in \mathcal L  \}\,
$$
 is an $\mathcal L$ invariant subspace of $L^2(G)$ that is called {\bf principal}.
 
\

As in subsection \ref{Invariant}, given two lattices $\mathcal K \subset \mathcal L$ in $G$, we are interested in finding necessary and sufficient conditions on $\psi \in L^2(G)$ for $\langle \psi \rangle_{\mathcal L}$ to be $\mathcal L$ invariant. A preliminary result is the following:

\begin{proposition} \label{Pro:4-4}
	Let $\psi \neq 0$, $\psi\in L^2(G)$, and $\mathcal K \subset \mathcal L$ be two lattices in $G$.  The following are equivalent:
	
	(a) $\langle \psi \rangle_{\mathcal K}$ is $\mathcal L$ invariant.
	
	(b) $T_{\ell}(\psi) \in \langle \psi \rangle_{\mathcal K}$ for all $\ell \in \mathcal L.$
\end{proposition}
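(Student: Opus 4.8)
The plan is to follow the template of Proposition \ref{Pro:2-5}, with the abelian group structure making the argument even more direct. The implication $(a)\Rightarrow(b)$ is immediate: since $\psi\in\langle\psi\rangle_{\mathcal K}$ and $\langle\psi\rangle_{\mathcal K}$ is assumed $\mathcal L$ invariant, we get $T_\ell(\psi)\in\langle\psi\rangle_{\mathcal K}$ for every $\ell\in\mathcal L$. The content is in $(b)\Rightarrow(a)$, and the crucial feature — unlike the $\R$ setting of Proposition \ref{Pro:2-5}, where only $T_{1/N}\psi\in\langle\psi\rangle_\Z$ was assumed and an induction over $q$ was needed — is that hypothesis $(b)$ already provides $T_\ell(\psi)\in\langle\psi\rangle_{\mathcal K}$ for all $\ell\in\mathcal L$ simultaneously. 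Hence no bootstrapping through the finite quotient $\mathcal L/\mathcal K$ will be required.

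For $(b)\Rightarrow(a)$, I would first record that $\langle\psi\rangle_{\mathcal K}$ is itself $\mathcal K$ invariant: for $k\in\mathcal K$ the translation $T_k$ is a bounded (indeed isometric) linear operator, so it carries the closed linear span of $\{T_{k'}\psi:k'\in\mathcal K\}$ into the closed linear span of $\{T_kT_{k'}\psi:k'\in\mathcal K\}=\{T_{k+k'}\psi:k'\in\mathcal K\}$, and $k+k'\in\mathcal K$. The main step is then to fix $\ell\in\mathcal L$ and show $T_\ell\big(\langle\psi\rangle_{\mathcal K}\big)\subseteq\langle\psi\rangle_{\mathcal K}$. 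Using again that $T_\ell$ is bounded and linear together with the commutativity $T_\ell T_k=T_{\ell+k}=T_kT_\ell$ of translations in the abelian group $G$, I obtain $T_\ell\big(\langle\psi\rangle_{\mathcal K}\big)\subseteq\langle T_\ell\psi\rangle_{\mathcal K}$. By hypothesis $(b)$, $T_\ell\psi\in\langle\psi\rangle_{\mathcal K}$; since $\langle\psi\rangle_{\mathcal K}$ is closed and $\mathcal K$ invariant it contains every $T_k(T_\ell\psi)$ and therefore their closed span $\langle T_\ell\psi\rangle_{\mathcal K}$. Combining the two inclusions yields $T_\ell\big(\langle\psi\rangle_{\mathcal K}\big)\subseteq\langle T_\ell\psi\rangle_{\mathcal K}\subseteq\langle\psi\rangle_{\mathcal K}$, which is exactly $\mathcal L$ invariance.

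The only point demanding care — and the nearest thing to an obstacle — is the interchange of $T_\ell$ with the closure defining $\langle\psi\rangle_{\mathcal K}$. This is where continuity of translation on $L^2(G)$ enters: a bounded operator maps the closure of a set into the closure of its image, so it suffices to verify the inclusions on the dense spanning set $\{T_k\psi:k\in\mathcal K\}$, where they reduce to the algebraic identity $T_\ell T_k=T_kT_\ell$. Everything else is formal, so I expect the proof to be short.
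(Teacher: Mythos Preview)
Your proposal is correct and follows exactly the paper's approach: the same chain of inclusions $T_\ell\big(\langle\psi\rangle_{\mathcal K}\big)\subseteq\langle T_\ell\psi\rangle_{\mathcal K}\subseteq\langle\psi\rangle_{\mathcal K}$ appears verbatim there, and your observation that no induction through $\mathcal L/\mathcal K$ is needed (in contrast to Proposition~\ref{Pro:2-5}) is precisely why the paper's proof here is a one-liner. Your added justification of the closure step via boundedness of $T_\ell$ is a detail the paper leaves implicit.
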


\begin{proof}
	$(a) \Rightarrow (b)$ is clear by definition. To prove $(b) \Rightarrow (a)$ let $f\in \langle \psi \rangle_{\mathcal K}.$ We have to show $T_\ell (f) \in \langle \psi \rangle_{\mathcal K}$ for all $\ell \in \mathcal L.$ But
	$$
	T_\ell (f) \in T_\ell (\langle \psi \rangle_{\mathcal K}) \subset \langle T_\ell(\psi) \rangle_{\mathcal K} \subset \langle \psi \rangle_{\mathcal K}\,,
	$$
	since $T_{\ell}(\psi) \in \langle \psi \rangle_{\mathcal K}$ by $(b)$.
\end{proof}

We need now a characterization of $\langle \psi \rangle_{\mathcal K}$ in terms of a multiplier. In the case of $\R$ this was accomplished by means of the Fourier transform. For LCA groups, the right tool is the periodization mapping introduced by H. Helson (see \cite{Hel92}) for the case $G=\mathbb T$ and extended to LCA groups in \cite{CP10}. For $f\in L^2(G)$ the {\bf periodization} mapping of $f$ relative to the lattice $\mathcal K$ is given by
$$
\mathcal T_{\mathcal K}(f)(\alpha) = \{ \widehat f (\alpha + \gamma) \}_{ \gamma \in \mathcal K^\perp}\,, \quad \alpha \in \widehat G\,.
$$
It can be shown (see Proposition 3.3 in \cite{CP10}) that $\mathcal T$ is an isometric isomorphism from $L^2(G)$ onto $L^2(C_{\mathcal K^\perp}\,, \ell^2(\mathcal K^\perp))$, where $C_{\mathcal K^\perp}$ is a measurable section of $\widehat G / \mathcal K^\perp.$ For our purposes we need the following statement of Proposition 3.3 in \cite{CP10} adapted to principal invariant subspaces.

\begin{proposition} \label{Pro:4-5}
	Let $\psi \neq 0$, $\psi\in L^2(G)$, and $\mathcal K$  a lattice in $G$.  
	
	(a) If $f \in \langle \psi \rangle_{\mathcal K}$, there exists a $\mathcal K^\perp$-periodic function $m_f$ on $\widehat G$ such that $ \mathcal T_{\mathcal K}(f)(\alpha) = m_f(\alpha) \mathcal\, T_{\mathcal K}(\psi)(\alpha),\ \alpha \in \widehat G.$
	
	(b) If $m$ is a $\mathcal K^\perp$-periodic function on $\widehat G$ such that $m \mathcal\, T_{\mathcal K}(\psi) \in L^2(C_{\mathcal K^\perp}\,, \ell^2(\mathcal K^\perp))$, the function $f$ defined by  $ \mathcal T_{\mathcal K}(f) = m \mathcal\, T_{\mathcal K}(\psi)$ belongs to $\langle \psi \rangle_{\mathcal K}$.
\end{proposition}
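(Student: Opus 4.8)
The plan is to derive both parts from a single structural fact: $\mathcal{T}_{\mathcal{K}}$ is an isometric isomorphism onto $L^2(C_{\mathcal{K}^\perp},\ell^2(\mathcal{K}^\perp))$ that intertwines $\mathcal{K}$-translation with multiplication by characters. First I would record the identity
\[
\mathcal{T}_{\mathcal{K}}(T_k\psi)(\alpha) = (-k,\alpha)\,\mathcal{T}_{\mathcal{K}}(\psi)(\alpha), \qquad k\in\mathcal{K},\ \alpha\in\widehat G,
\]
which holds because $\widehat{T_k\psi}(\alpha+\gamma)=(-k,\alpha+\gamma)\widehat\psi(\alpha+\gamma)$ and $(-k,\gamma)=1$ for $\gamma\in\mathcal{K}^\perp$; moreover $\alpha\mapsto(-k,\alpha)$ is $\mathcal{K}^\perp$-periodic. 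Hence $\mathcal{T}_{\mathcal{K}}$ carries $\langle\psi\rangle_{\mathcal{K}}$ isometrically onto the closed linear span of $\{(-k,\cdot)\,\mathcal{T}_{\mathcal{K}}(\psi):k\in\mathcal{K}\}$. Since $\mathcal{K}$ is discrete, $\widehat G/\mathcal{K}^\perp\cong\widehat{\mathcal{K}}$ is a compact abelian group whose character group is exactly $\{(-k,\cdot):k\in\mathcal{K}\}$, so by Stone--Weierstrass the trigonometric polynomials $\sum_k c_k(-k,\cdot)$ (finite sums) are dense in $C(\widehat G/\mathcal{K}^\perp)$, and therefore in $L^2(\nu)$ for every finite Borel measure $\nu$ on this group.

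For part (b) I would introduce the finite measure $d\nu(\alpha)=\|\mathcal{T}_{\mathcal{K}}(\psi)(\alpha)\|_{\ell^2}^2\,d\mu_{\widehat G}(\alpha)$ on $C_{\mathcal{K}^\perp}$, so that the hypothesis $m\,\mathcal{T}_{\mathcal{K}}(\psi)\in L^2$ reads precisely $m\in L^2(\nu)$. Choosing trigonometric polynomials $m_n\to m$ in $L^2(\nu)$ gives $m_n\,\mathcal{T}_{\mathcal{K}}(\psi)\to m\,\mathcal{T}_{\mathcal{K}}(\psi)$ in $L^2(C_{\mathcal{K}^\perp},\ell^2(\mathcal{K}^\perp))$. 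As each $m_n\,\mathcal{T}_{\mathcal{K}}(\psi)=\mathcal{T}_{\mathcal{K}}\bigl(\sum_k c_k^{(n)}T_k\psi\bigr)$ is the image of an element of $\langle\psi\rangle_{\mathcal{K}}$, the isometry forces the preimages to converge in $L^2(G)$ to $f$, which thus lies in the closed space $\langle\psi\rangle_{\mathcal{K}}$.

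For part (a), given $f\in\langle\psi\rangle_{\mathcal{K}}$, I would define the candidate multiplier directly and avoid any limiting construction: set $E=\{\alpha:\mathcal{T}_{\mathcal{K}}(\psi)(\alpha)\neq0\}$ and
\[
m_f(\alpha):=\frac{\langle\mathcal{T}_{\mathcal{K}}(f)(\alpha),\mathcal{T}_{\mathcal{K}}(\psi)(\alpha)\rangle}{\|\mathcal{T}_{\mathcal{K}}(\psi)(\alpha)\|_{\ell^2}^2}\quad\text{on }E,\qquad m_f:=0\ \text{off }E.
\]
Strong measurability of the $\ell^2$-valued maps $\mathcal{T}_{\mathcal{K}}(f)$ and $\mathcal{T}_{\mathcal{K}}(\psi)$ makes $m_f$ measurable, and $\mathcal{K}^\perp$-periodic after the usual extension (the index-shift on $\ell^2(\mathcal{K}^\perp)$ induced by a shift of $\alpha$ by $\gamma\in\mathcal{K}^\perp$ is unitary and cancels in the quotient). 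It then remains to verify $\mathcal{T}_{\mathcal{K}}(f)(\alpha)=m_f(\alpha)\mathcal{T}_{\mathcal{K}}(\psi)(\alpha)$ a.e.; off $E$ both sides vanish, while on $E$ one writes $\mathcal{T}_{\mathcal{K}}(f)$ as an $L^2$-limit of $m_n\,\mathcal{T}_{\mathcal{K}}(\psi)$ with trigonometric polynomials $m_n$, passes to a subsequence converging fiberwise in $\ell^2$ for a.e. $\alpha$, and observes that each fiber $m_n(\alpha)\mathcal{T}_{\mathcal{K}}(\psi)(\alpha)$ lies in the one-dimensional, hence closed, subspace $\mathbb{C}\cdot\mathcal{T}_{\mathcal{K}}(\psi)(\alpha)$, so the limit $\mathcal{T}_{\mathcal{K}}(f)(\alpha)$ lies there too and equals $m_f(\alpha)\mathcal{T}_{\mathcal{K}}(\psi)(\alpha)$ by the inner-product formula.

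The hard part is exactly this last step of part (a): showing that the fiber of $\mathcal{T}_{\mathcal{K}}(f)$ is genuinely a scalar multiple of the fiber of $\mathcal{T}_{\mathcal{K}}(\psi)$ almost everywhere, which rests on the fiberwise closedness of $\mathbb{C}\cdot\mathcal{T}_{\mathcal{K}}(\psi)(\alpha)$ together with a measurable-selection/subsequence argument. This is the usual subtlety of fiberization and range-function descriptions of shift-invariant spaces, and it is precisely the content specialized here from Proposition 3.3 of \cite{CP10}; alternatively one may simply quote that result and restrict it to the cyclic (principal) module generated by $\mathcal{T}_{\mathcal{K}}(\psi)$.
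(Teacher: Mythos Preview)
Your proposal is correct; the argument you give is the standard fiberization proof via the intertwining property of $\mathcal T_{\mathcal K}$, density of characters in $L^2(\nu)$, and a.e.\ convergence along a subsequence to pin down the fiberwise scalar. The paper, however, does not prove this proposition at all: it simply states it as ``the following statement of Proposition 3.3 in \cite{CP10} adapted to principal invariant subspaces'' and moves on. So you have supplied more than the paper does, and your closing remark---that one may just quote \cite{CP10} and restrict to the cyclic module generated by $\mathcal T_{\mathcal K}(\psi)$---is exactly what the paper chose to do.
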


We need a similar result in terms of multipliers of the Zak transform.

\begin{corollary} \label{Cor:4-6}
	Let $\psi \neq 0$, $\psi\in L^2(G)$, and $\mathcal K$  a lattice in $G$.  
	
	(a) If $f \in \langle \psi \rangle_{\mathcal K}$, there exists a $\mathcal K^\perp$-periodic function $m_f$ on $\widehat G$ such that $ Z_{\mathcal K}(f)(\alpha,x) = m_f(\alpha) Z_{\mathcal K}(\alpha, x),\ \alpha \in \widehat G, \ x\in G.$
	
	(b) If $m$ is a $\mathcal K^\perp$-periodic function on $\widehat G$ such that $m \mathcal\, \mathcal T_{\mathcal K}(\psi) \in L^2(C_{\mathcal K^\perp}\,, \ell^2(\mathcal K^\perp))$, the function $f$ defined by  $ Z_{\mathcal K}(f)(\alpha , x) = m(\alpha) Z_{\mathcal K}(\psi)(\alpha, x)$ belongs to $\langle \psi \rangle_{\mathcal K}$.
\end{corollary}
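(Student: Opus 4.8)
The plan is to transfer the multiplier characterization of Proposition \ref{Pro:4-5} from the periodization mapping $\mathcal{T}_{\mathcal{K}}$ to the Zak transform $Z_{\mathcal{K}}$, using Proposition \ref{Pro:4-3} as the bridge between the two. The essential observation is that, by Proposition \ref{Pro:4-3} applied with the lattice $\mathcal{K}$, we have $Z_{\mathcal{K}}(f)(\alpha,x) = \sum_{\gamma\in\mathcal{K}^\perp}\widehat{f}(\alpha+\gamma)(x,\alpha+\gamma)$; that is, for fixed $\alpha$ the value $Z_{\mathcal{K}}(f)(\alpha,\cdot)$ is obtained from the sequence $\mathcal{T}_{\mathcal{K}}(f)(\alpha)=\{\widehat{f}(\alpha+\gamma)\}_{\gamma\in\mathcal{K}^\perp}$ by pairing each entry against the character $(x,\alpha+\gamma)$. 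Crucially, a $\mathcal{K}^\perp$-periodic factor $m$ satisfies $m(\alpha)=m(\alpha+\gamma)$ for all $\gamma\in\mathcal{K}^\perp$, so it can be pulled in and out of this sum freely.

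For part (a), I would start from Proposition \ref{Pro:4-5}(a): there is a $\mathcal{K}^\perp$-periodic $m_f$ on $\widehat{G}$ with $\mathcal{T}_{\mathcal{K}}(f)(\alpha)=m_f(\alpha)\,\mathcal{T}_{\mathcal{K}}(\psi)(\alpha)$, which componentwise reads $\widehat{f}(\alpha+\gamma)=m_f(\alpha)\widehat{\psi}(\alpha+\gamma)$; by periodicity this is just $\widehat{f}=m_f\widehat{\psi}$ on $\widehat{G}$. Substituting into the formula above and using $m_f(\alpha+\gamma)=m_f(\alpha)$ to factor $m_f(\alpha)$ out of the sum yields $Z_{\mathcal{K}}(f)(\alpha,x)=m_f(\alpha)\sum_{\gamma}\widehat{\psi}(\alpha+\gamma)(x,\alpha+\gamma)=m_f(\alpha)Z_{\mathcal{K}}(\psi)(\alpha,x)$, as required. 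This mirrors exactly the computation in Corollary \ref{Cor:2-7}(a), with $\mathcal{K}^\perp$ playing the role of $\Z$ and Proposition \ref{Pro:4-3} replacing Proposition \ref{Pro:2-3}.

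For part (b), given a $\mathcal{K}^\perp$-periodic $m$ with $m\,\mathcal{T}_{\mathcal{K}}(\psi)\in L^2(C_{\mathcal{K}^\perp},\ell^2(\mathcal{K}^\perp))$, I would first record that since $\mathcal{T}_{\mathcal{K}}$ is an isometric isomorphism and $m(\alpha+\gamma)=m(\alpha)$, the product $m\,\mathcal{T}_{\mathcal{K}}(\psi)$ equals $\mathcal{T}_{\mathcal{K}}(g)$ for $g:=\mathcal{F}^{-1}(m\widehat{\psi})\in L^2(G)$; in particular $m\widehat{\psi}\in L^2(\widehat{G})$. The same factoring computation as in (a), now read in reverse, shows $m(\alpha)Z_{\mathcal{K}}(\psi)(\alpha,x)=\sum_{\gamma}m(\alpha+\gamma)\widehat{\psi}(\alpha+\gamma)(x,\alpha+\gamma)=Z_{\mathcal{K}}(g)(\alpha,x)$. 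Hence the $f$ defined by $Z_{\mathcal{K}}(f)=m\,Z_{\mathcal{K}}(\psi)$ satisfies $Z_{\mathcal{K}}(f)=Z_{\mathcal{K}}(g)$, and since $Z_{\mathcal{K}}$ is an isometry, $f=g$, so $\mathcal{T}_{\mathcal{K}}(f)=m\,\mathcal{T}_{\mathcal{K}}(\psi)$; Proposition \ref{Pro:4-5}(b) then gives $f\in\langle\psi\rangle_{\mathcal{K}}$.

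The computations are routine; the only point requiring care --- and the place I would spend the most attention --- is the bookkeeping in part (b) that identifies $m\,\mathcal{T}_{\mathcal{K}}(\psi)$ with $\mathcal{T}_{\mathcal{K}}(\mathcal{F}^{-1}(m\widehat{\psi}))$ and thereby guarantees $m\widehat{\psi}\in L^2(\widehat{G})$, so that $\mathcal{F}^{-1}(m\widehat{\psi})$ is a legitimate element of $L^2(G)$ and the isometry of $Z_{\mathcal{K}}$ can legitimately be invoked to conclude $f=g$.
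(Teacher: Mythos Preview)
Your proposal is correct and follows essentially the same route as the paper's proof: both parts reduce to Proposition~\ref{Pro:4-5} via the Fourier-side expression of $Z_{\mathcal K}$ from Proposition~\ref{Pro:4-3}, pulling the $\mathcal K^\perp$-periodic multiplier through the sum over $\mathcal K^\perp$. The only cosmetic difference is that the paper packages the sum $\sum_{\gamma\in\mathcal K^\perp}\widehat f(\alpha+\gamma)(x,\alpha+\gamma)$ as an $\ell^2(\mathcal K^\perp)$ inner product and writes the auxiliary function in (b) as $\mathcal T_{\mathcal K}^{-1}(m\,\mathcal T_{\mathcal K}(\psi))$ rather than your equivalent $\mathcal F^{-1}(m\widehat\psi)$.
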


\begin{proof}
	$(a)$ Choose $m_f$ as in part $(a)$ of Proposition \ref{Pro:4-5}. Then, by Proposition \ref{Pro:4-3} for $\mathcal L = \mathcal K$, since $m_f$ is $\mathcal K^\perp$-periodic, we have
	\begin{eqnarray*}
	Z_{\mathcal K}(f)(\alpha , x) &=& \sum_{\gamma \in \mathcal K^\perp} \widehat f(\alpha + \gamma)(x, \alpha + \gamma)\\
	&=& \langle \mathcal T_{\mathcal K}(f)(\alpha) , (x, \cdot)\rangle_{\ell^2(\mathcal K^\perp) } (x,\alpha)  \\
	&=& \langle m_f(\alpha) \mathcal T_{\mathcal K}(\psi)(\alpha) , (x, \cdot)\rangle_{\ell^2(\mathcal K^\perp) } (x,\alpha)  \\
	&=& m_f(\alpha) \langle \mathcal T_{\mathcal K}(\psi)(\alpha) , (x, \cdot)\rangle_{\ell^2(\mathcal K^\perp) } (x,\alpha)   \\
	&=& m_f(\alpha) 	Z_{\mathcal K}(\psi)(\alpha , x)\,.  
	\end{eqnarray*}

$(b)$ If $\alpha \in \widehat G$ and $x \in G$, by Proposition \ref{Pro:4-3} and the $\mathcal K^\perp$-periodicity of $m$, we can write:
\begin{eqnarray*}
	m(\alpha) Z_{\mathcal K}(\psi)(\alpha , x) &=& m(\alpha) \langle \mathcal T_{\mathcal K}(\psi)(\alpha) , (x, \cdot)\rangle_{\ell^2(\mathcal K^\perp) } (x,\alpha) \\
	&=&  \langle m(\alpha) \mathcal T_{\mathcal K}(\psi)(\alpha) , (x, \cdot)\rangle_{\ell^2(\mathcal K^\perp) } (x,\alpha)  \\
	&=& Z_{\mathcal K} (\mathcal T_{\mathcal K}^{-1}(m\mathcal T_{\mathcal K} (\psi)))\,.  
\end{eqnarray*}
By $(b)$, $Z_{\mathcal K} (\mathcal T_{\mathcal K}^{-1}(m\mathcal T_{\mathcal K} (\psi))) = Z_{\mathcal K}(f)$, and since $Z_{\mathcal K}$ is an isometry, we conclude $m \mathcal T_{\mathcal K}(\psi) = \mathcal T_{\mathcal K}(f).$ The result now follows from $(b)$ of Proposition \ref{Pro:4-5}.
\end{proof}

\section{Proof of Theorem \ref{Th:1-2}} \label{ProofLCA}

\subsection{Proof of (a) implies (b) of Theorem \ref{Th:1-2}}

Assume that $\langle \psi \rangle_{\mathcal K}$ is $\mathcal L$ invariant. By Proposition \ref{Pro:4-4}, for every $\ell \in \mathcal L$, we have $T_\ell (\psi) \in \langle \psi \rangle_{\mathcal K}$. By Corollary \ref{Cor:4-6}, there exists a $\mathcal K^\perp$-periodic function $m_\ell$ on $\widehat G$ such that 
\begin{equation} \label{Eq:5-1}
 Z_{\mathcal K}(T_\ell(\psi))(\alpha,x) = m_\ell(\alpha) Z_{\mathcal K}(\psi)(\alpha, x),\quad \alpha \in \widehat G, \ x\in G.
\end{equation}
On the other hand, by Proposition \ref{Pro:4-2}, for $\ell \in \mathcal L$, 
\begin{equation} \label{Eq:5-2}
Z_{\mathcal K}(T_\ell \psi)(\alpha,x) = (-\ell , \alpha) \sum_{[\beta]\in \mathcal K^\perp / \mathcal L^\perp} (-\ell,  \beta) Z_{\mathcal L}(\psi)(\alpha + \beta,x),
\end{equation}
for $\ell \in \mathcal L, \ \alpha \in \widehat G, \ x\in G.$
Define
$$
A_\ell(\alpha, x) := \sum_{[\beta]\in \mathcal K^\perp / \mathcal L^\perp} (-\ell,  \beta) Z_{\mathcal L}(\psi)(\alpha + \beta,x)\,.
$$
We know that for $[\ell] \in \mathcal L /\mathcal K$ and $[\alpha] \in \mathcal K^\perp /\mathcal L^\perp$, $([\ell] , [\alpha]) = (\ell , \alpha)$ is well defined. Also, if $[\ell_1]=[\ell_2]$ in $\mathcal L /\mathcal K$ it can be shown that $A_{\ell_1}(\alpha, x) = A_{\ell_2}(\alpha, x).$ Thus, for 
$[\ell] \in \mathcal L /\mathcal K, \, \alpha \in \widehat G, \, x \in G,$ there is no ambiguity in defining 
\begin{equation} \label{Eq:5-3}
A_{[\ell]}(\alpha, x) := \sum_{[\beta]\in \mathcal K^\perp / \mathcal L^\perp} (-[\ell],  [\beta]) Z_{\mathcal L}(\psi)(\alpha + \beta,x)\,.
\end{equation}
Use the orthogonality relations \eqref{Eq:4-2} to obtain, for $\beta\in  \mathcal K^\perp $, $\alpha\in \widehat G, \ x\in G$,
\begin{equation} \label{Eq:5-4}
Z_{\mathcal L}(\psi)(\alpha + \beta,x)  := \frac{1}{|\mathcal L / \mathcal K|} \sum_{[\ell]\in \mathcal L / \mathcal K} (\ell,  \beta) A_{[\ell]}(\alpha, x)\,.
\end{equation}

\begin{lemma} \label{Lem:5-1}
	If $[\ell_1] + [\ell_2]  = [s_1] + [s_2]$ in $\mathcal L / \mathcal K$, $\alpha \in \widehat G, \, x \in G$, then $$A_{[\ell_1]}(\alpha,x) A_{[\ell_2]}(\alpha,y) = A_{[s_1]}(\alpha,x) A_{[s_2]}(\alpha,y) .$$
\end{lemma}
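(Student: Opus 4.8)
The plan is to follow the blueprint of Lemma \ref{Lem:3-1}, replacing the powers $m(\xi)^p$ of the one–dimensional multiplier by a genuinely multiplicative family of multipliers indexed by $\mathcal L$. First I would combine the two expressions \eqref{Eq:5-1} and \eqref{Eq:5-2} for $Z_{\mathcal K}(T_\ell\psi)$. Since $(-\ell,\alpha)A_\ell(\alpha,x) = Z_{\mathcal K}(T_\ell\psi)(\alpha,x) = m_\ell(\alpha)Z_{\mathcal K}(\psi)(\alpha,x)$ and $(-\ell,\alpha)^{-1} = (\ell,\alpha)$, this gives
\begin{equation*}
A_\ell(\alpha,x) = (\ell,\alpha)\,m_\ell(\alpha)\,Z_{\mathcal K}(\psi)(\alpha,x).
\end{equation*}
Thus each $A_{[\ell]}(\alpha,\cdot)$ is a scalar multiple of $Z_{\mathcal K}(\psi)(\alpha,\cdot)$; I set $B_{[\ell]}(\alpha):=(\ell,\alpha)\,m_\ell(\alpha)$, and note that this scalar depends only on the class $[\ell]\in\mathcal L/\mathcal K$, consistently with the fact (used already before \eqref{Eq:5-3}) that $A_\ell$ does.

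The crucial step is to prove that $[\ell]\mapsto B_{[\ell]}(\alpha)$ is a homomorphism, and for this I would show that the multiplier is multiplicative in $\ell$. Recalling that \eqref{Eq:5-1} reads $Z_{\mathcal K}(\psi)(\alpha,x-\ell)=m_\ell(\alpha)Z_{\mathcal K}(\psi)(\alpha,x)$, I replace $x$ by $x-\ell_2$ in the instance for $\ell_1$ and then apply the instance for $\ell_2$, obtaining
\begin{equation*}
Z_{\mathcal K}(\psi)(\alpha,x-\ell_1-\ell_2) = m_{\ell_1}(\alpha)\,m_{\ell_2}(\alpha)\,Z_{\mathcal K}(\psi)(\alpha,x);
\end{equation*}
comparing with the instance for $\ell_1+\ell_2$ yields $m_{\ell_1+\ell_2}(\alpha)=m_{\ell_1}(\alpha)m_{\ell_2}(\alpha)$ wherever $Z_{\mathcal K}(\psi)(\alpha,\cdot)\not\equiv 0$ (elsewhere the value of $m$ is immaterial, and I may take it multiplicative by convention). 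Since $(\ell_1,\alpha)(\ell_2,\alpha)=(\ell_1+\ell_2,\alpha)$, this gives $B_{[\ell_1]}(\alpha)B_{[\ell_2]}(\alpha)=B_{[\ell_1]+[\ell_2]}(\alpha)$.

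With these two facts the lemma is immediate:
\begin{align*}
A_{[\ell_1]}(\alpha,x)\,A_{[\ell_2]}(\alpha,y)
&= B_{[\ell_1]}(\alpha)\,B_{[\ell_2]}(\alpha)\,Z_{\mathcal K}(\psi)(\alpha,x)\,Z_{\mathcal K}(\psi)(\alpha,y)\\
&= B_{[\ell_1]+[\ell_2]}(\alpha)\,Z_{\mathcal K}(\psi)(\alpha,x)\,Z_{\mathcal K}(\psi)(\alpha,y),
\end{align*}
and the same computation holds for $s_1,s_2$. Since $[\ell_1]+[\ell_2]=[s_1]+[s_2]$ in $\mathcal L/\mathcal K$, the right–hand sides coincide, proving the claim.

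I expect the only delicate point to be the measure–theoretic bookkeeping behind the multiplicativity of $m$: the identity \eqref{Eq:5-1} holds only for a.e.\ $x$ and a.e.\ $\alpha$, so the equality $m_{\ell_1+\ell_2}=m_{\ell_1}m_{\ell_2}$ must be asserted off a common null set and only on the set where $Z_{\mathcal K}(\psi)(\alpha,\cdot)$ does not vanish identically. A reassuring cross–check is the reduction modulo $\mathcal K$: for $k\in\mathcal K$ the translation property \eqref{Eq:4-7} for the lattice $\mathcal K$ gives $m_k(\alpha)=(-k,\alpha)$, which plays here the role that \eqref{Eq:3-6} played in Lemma \ref{Lem:3-1}, and indeed forces $(\ell_1+\ell_2,\alpha)m_{\ell_1+\ell_2}(\alpha)=(s_1+s_2,\alpha)m_{s_1+s_2}(\alpha)$ when $(s_1+s_2)-(\ell_1+\ell_2)\in\mathcal K$, confirming that $B_{[\ell]}$ is well defined. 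An alternative, more computational route would expand the products of the defining sums \eqref{Eq:5-3} and use the orthogonality relations \eqref{Eq:4-2}–\eqref{Eq:4-3} exactly as in the passage following Lemma \ref{Lem:3-1}, but the homomorphism argument above is shorter.
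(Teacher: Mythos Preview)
Your argument is correct and is essentially the paper's own proof, just repackaged: the paper also writes $A_{[\ell]}(\alpha,x)=(\ell,\alpha)m_\ell(\alpha)Z_{\mathcal K}(\psi)(\alpha,x)$, obtains the multiplicativity $m_{s_1}(\alpha)m_{s_2}(\alpha)Z_{\mathcal K}(\psi)(\alpha,x)=Z_{\mathcal K}(T_{s_1+s_2}\psi)(\alpha,x)$ by the same iteration you use, and then invokes \eqref{Eq:4-7} for $k\in\mathcal K$ (your ``cross--check'' $m_k(\alpha)=(-k,\alpha)$) to pass to the quotient. The only difference is cosmetic: you isolate the scalar $B_{[\ell]}(\alpha)$ and phrase the computation as ``$[\ell]\mapsto B_{[\ell]}(\alpha)$ is a homomorphism on $\mathcal L/\mathcal K$,'' whereas the paper compares the two products directly without naming $B$.
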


\begin{proof}
	By \eqref{Eq:5-3}, \eqref{Eq:5-2}, and \eqref{Eq:5-1},
	$$
	A_{[\ell_1]}(\alpha,x) A_{[\ell_2]}(\alpha,y) = (\ell_1 + \ell_2, \alpha) m_{\ell_1}(\alpha) m_{\ell_2}(\alpha) Z_{\mathcal K}( \psi)(\alpha,x) Z_{\mathcal K}( \psi)(\alpha,y)\,.
	$$
	Similarly,
	$$
	A_{[s_1]}(\alpha,x) A_{[s_2]}(\alpha,y) = (s_1 + s_2, \alpha) m_{s_1}(\alpha) m_{s_2}(\alpha) Z_{\mathcal K}( \psi)(\alpha,x) Z_{\mathcal K}( \psi)(\alpha,y)\,.
	$$
	Since $[\ell_1] + [\ell_2]  = [s_1] + [s_2]$ in $ \mathcal L / \mathcal K$, there exists $k\in \mathcal K$ such that $s_1 +s_2 =\ell_1 + \ell_2 +k$. Hence, by \eqref{Eq:4-7} with $\mathcal L = \mathcal K$,
	\begin{eqnarray*}
	    m_{s_1}(\alpha) m_{s_2}(\alpha) Z_{\mathcal K}( \psi)(\alpha,x) &=& Z_{\mathcal  K} (T_{s_1 + s_2}(\psi)) (\alpha, x) \\ &=& Z_{\mathcal  K} (T_k T_{\ell_1 + \ell_2}(\psi)) (\alpha, x) \\
	    &=& (-k,\alpha) Z_{\mathcal  K} ( T_{\ell_1 + \ell_2}(\psi)) (\alpha, x)\\ &=& (-k,\alpha) m_{\ell_1}(\alpha) m_{\ell_2}(\alpha) Z_{\mathcal K}( \psi)(\alpha,x)\,.
    \end{eqnarray*}
    Thus,
    \begin{eqnarray*}
        A_{[s_1]}(\alpha,x) A_{[s_2]}(\alpha,y) &=& (\ell_1 + \ell_2 + k, \alpha) (-k, \alpha) m_{\ell_1}(\alpha) m_{\ell_2}(\alpha) Z_{\mathcal K}( \psi)(\alpha,x) Z_{\mathcal K}( \psi)(\alpha,y)\\
        &=& A_{[\ell_1]}(\alpha,x) A_{[\ell_2]}(\alpha,y) 
    \end{eqnarray*}
    since $(k,\alpha) (- k, \alpha) = |(k,\alpha)|^2 = 1.$
\end{proof}

We continue with the proof of $(a)$ implies $(b)$ of Theorem \ref{Th:1-2}. Choose $[\beta_1] \neq [\beta_2] \in \mathcal K^\perp / \mathcal L^\perp$, $\alpha \in \widehat G, \, x \in G$. By \eqref{Eq:5-4},
\begin{eqnarray*}
& & Z_{\mathcal L}(\psi)(\alpha + \beta_1,x)  \, Z_{\mathcal L}(\psi)(\alpha + \beta_2,y) \\
&=& \frac{1}{|\mathcal L / \mathcal K|^2} \sum_{[\ell]\in \mathcal L / \mathcal K} \sum_{[m]\in \mathcal L / \mathcal K} ([\ell],  [\beta_1]) \, ([m],  [\beta_2]) A_{[\ell]}(\alpha, x)\, A_{[m]}(\alpha, y) \\
&=& \frac{1}{|\mathcal L / \mathcal K|^2} \sum_{[\ell]\in \mathcal L / \mathcal K} \sum_{[s]\in \mathcal L / \mathcal K} ([\ell],  [\beta_1]) \, ([s-\ell],  [\beta_2]) A_{[\ell]}(\alpha, x)\, A_{[s-\ell]}(\alpha, y)\,.
\end{eqnarray*}
Since $[\ell] + [s - \ell] = [s]= [0] + [s]$, by Lemma \ref{Lem:5-1},
\begin{eqnarray*}
	& & Z_{\mathcal L}(\psi)(\alpha + \beta_1,x)  \, Z_{\mathcal L}(\psi)(\alpha + \beta_2,y) \\
	&=& \frac{1}{|\mathcal L / \mathcal K|^2} \sum_{[\ell]\in \mathcal L / \mathcal K} \sum_{[s]\in \mathcal L / \mathcal K} ([\ell],  [\beta_1] - [\beta_2]) \, ([s],  [\beta_2]) A_{[0]}(\alpha, x)\, A_{[s]}(\alpha, y) \\
	&=& \frac{1}{|\mathcal L / \mathcal K|^2} \sum_{[s]\in \mathcal L / \mathcal K} \left( \sum_{[\ell]\in \mathcal L / \mathcal K} ([\ell],  [\beta_1]-[\beta_2]) \right) \, ([s],  [\beta_2]) A_{[0]}(\alpha, x)\, A_{[s]}(\alpha, y)\,.
\end{eqnarray*}
Since, when $[\beta_1] \neq [\beta_2],$ $\displaystyle \sum_{[\ell]\in \mathcal L / \mathcal K} ([\ell],  [\beta_1]-[\beta_2])=0$ by \eqref{Eq:4-2}, the result is established.

\

\subsection{Proof of (b) implies (a) of Theorem \ref{Th:1-2}}

Assume that 
\begin{equation} \label{Eq:5-5}
Z_{\mathcal L}(\psi)(\alpha + \beta_1, x) \, Z_{\mathcal L}(\psi)(\alpha + \beta_2, y) = 0
\end{equation} 
when $[\beta_1] \neq [\beta_2]$ in $\mathcal K^\perp /\mathcal L^\perp$, and a. e. $x,y \in C_{\mathcal L}\,, \alpha \in C_{\mathcal K^\perp}.$ Recall that
\begin{equation} \label{Eq:5-6}
   \bigcup_{\ell\in \mathcal L} C_{\mathcal L} + \ell = G\,, \qquad \mbox{and}  \qquad \bigcup_{\gamma\in \mathcal K^\perp} C_{\mathcal K^\perp} + \gamma = \widehat G \,,
\end{equation}
with disjoint unions. By Proposition \ref{Pro:4-4} and Corollary \ref{Cor:4-6} we have to show that for $\ell \in \mathcal L$ there exists a $\mathcal K^\perp$-periodic function $m_\ell$ defined on $\widehat G$ such that $m_\ell \mathcal\, Z_{\mathcal K}(\psi) \in L^2(C_{\mathcal K^\perp}\,, \ell^2(\mathcal K^\perp))$ and 
\begin{equation} \label{Eq:5-7}
 Z_{\mathcal K}(T_\ell(\psi))(\alpha , x) = m_\ell(\alpha) Z_{\mathcal K}(\psi)(\alpha, x)\,, \quad \alpha\in \widehat G\,, \ x\in G.
\end{equation}

For $x\in G$ and $[\beta]\in \mathcal K^\perp /\mathcal L^\perp$ let
$$
S_\psi^{[\beta]}(x) := \{\alpha \in  C_{\mathcal K^\perp}: Z_{\mathcal L}(\psi)(\alpha + \beta, x) \neq 0 \} \,.
$$
Notice that the definition of $S_\psi^{[\beta]}(x) $ does not depend on the representation chosen for $[\beta]$. Indeed, If $\beta_1 \in [\beta],$ there exists $\gamma \in \mathcal L^\perp$ such that $\beta_1 - \beta = \gamma,$ and since $(-\ell, \gamma)=1$ when $\ell \in \mathcal L$ and $\gamma \in \mathcal L^\perp,$
\begin{eqnarray*}
    Z_{\mathcal L}(\psi)(\alpha+\beta_1, x) &= & \sum_{\ell \in \mathcal L} \psi(x+\ell) (-\ell, \alpha + \beta_1) \\
    &=& \left( \sum_{\ell \in \mathcal L} \psi(x+\ell) (-\ell, \alpha + \beta) \right) (-\ell , \gamma) \\
    &=& Z_{\mathcal L}(\psi)(\alpha+\beta, x)\,.
\end{eqnarray*}

Consider
\begin{equation} \label{Eq:5-8}
S_\psi^{[\beta]} := \bigcup_{x\in C_{\mathcal K}} S_\psi^{[\beta]}(x)\,, \qquad \mbox{and}  \qquad S_\psi := C_{\mathcal K^\perp} \setminus \bigcup_{[\beta]\in \mathcal K^\perp /\mathcal L^\perp} S_\psi^{[\beta]}\,.
\end{equation}
Observe that the union in the left hand side of \eqref{Eq:5-8} is disjoint due to \eqref{Eq:5-5}.

For $\ell \in \mathcal L$ define
\begin{equation} \label{Def:m}
m_\ell (\alpha) := \sum_{[\beta]\in \mathcal K^\perp /\mathcal L^\perp} (-\ell, \alpha) \chi_{S_\psi^{[\beta]}} (\alpha) (-\ell, \beta) + \chi_{S_\psi} (\alpha)\,, \quad \alpha \in  C_{\mathcal K^\perp}\,,
\end{equation}
and extend $m_\ell$ to be $\mathcal K^\perp$-periodic in $\widehat G.$

Notice that, by Proposition \ref{Pro:4-1},
\begin{equation} \label{Eq:5-9}
   Z_{\mathcal K}(\psi)(\alpha,x) = \sum_{[\beta]\in \mathcal K^\perp /\mathcal L^\perp} Z_{\mathcal L}(\psi)(\alpha + \beta,x)\,,
\end{equation}
and, by Proposition \ref{Pro:4-2},
\begin{equation} \label{Eq:5-10}
Z_{\mathcal K}(T_\ell(\psi))(\alpha,x) = \sum_{[\beta]\in \mathcal K^\perp /\mathcal L^\perp} (-\ell, \alpha + \beta) Z_{\mathcal L}(\psi)(\alpha + \beta,x)\,.
\end{equation}

If given $\alpha \in C_{\mathcal K^\perp}$  and  $x \in C_{\mathcal L}\,, $
$Z_{\mathcal L}(\psi)(\alpha + \beta,x) = 0$ for all $[\beta]\in \mathcal K^\perp /\mathcal L^\perp$, then by \eqref{Eq:5-9}, $Z_{\mathcal K}(\psi)(\alpha,x) = 0$, and by \eqref{Eq:5-10}, $Z_{\mathcal K}(T_\ell(\psi))(\alpha,x) = 0$. Therfore, \eqref{Eq:5-7} holds trivially for any value given to $m_\ell$ and in particular for the value given by the definition of $m_\ell$ in \eqref{Def:m}.

If $Z_{\mathcal L}(\psi)(\alpha + \beta,x) \neq 0$ for some $[\beta]\in \mathcal K^\perp /\mathcal L^\perp$, by \eqref{Eq:5-5} and \eqref{Eq:5-9} we have $Z_{\mathcal K}(\psi)(\alpha,x) = Z_{\mathcal L}(\psi)(\alpha + \beta,x),$ and by \eqref{Eq:5-5} and \eqref{Eq:5-10}, $Z_{\mathcal K}(T_\ell(\psi))(\alpha,x) =  (-\ell, \alpha + \beta) Z_{\mathcal L}(\psi)(\alpha + \beta,x)\,.$ In this case $\alpha \in S_\psi^{[\beta]}$ and, by \eqref{Def:m}, $m_\ell (\alpha) = (-\ell, \alpha+\beta),$ so that \eqref{Eq:5-7} also holds. Observe that $|m_\ell(\alpha)|=1$ and since 
$ \mathcal T_{\mathcal K}(\psi) \in L^2(C_{\mathcal K^\perp}\,, \ell^2(\mathcal K^\perp))$, also $m_\ell \mathcal\, \mathcal T_{\mathcal K}(\psi) \in L^2(C_{\mathcal K^\perp}\,, \ell^2(\mathcal K^\perp))$.

Finally, although we have only proved \eqref{Eq:5-7} for $\alpha \in C_{\mathcal K^\perp}$  and  $x \in C_{\mathcal L}\,, $ the quasi-periodicity properties of $Z_{\mathcal K}$ and the periodicity properties of $m_\ell$, together with \eqref{Eq:5-6}, prove the result for all $\alpha \in \widehat G$ and all $x\in G.$

\bibliographystyle{plain}

\vskip 1truemm

\end{document}